\documentclass[reqno,11pt]{amsart}
\usepackage[tmargin=30mm,bmargin=30mm,hmargin=25mm]{geometry}
\usepackage{amsmath}
\usepackage{amssymb}
\usepackage{amsthm}
\usepackage{eucal}
\usepackage{enumerate}
\input xy
\xyoption{all}
\numberwithin{equation}{section}
\usepackage{hyperref}

\newtheorem{theorem}{Theorem}[section]

\newtheorem{lemma}[theorem]{Lemma}
\newtheorem{proposition}[theorem]{Proposition}
\theoremstyle{definition}
\newtheorem{definition}[theorem]{Definition}
\newtheorem{example}[theorem]{Example}

\begin{document}
\title[Quasi-hereditary rings are closed under block extensions]{Quasi-hereditary rings are closed under taking block extensions}

\author{Takahide Adachi}
\address{T.~Adachi: Faculty of Global and Science Studies, Yamaguchi University, 1677-1 Yoshida, Yamaguchi 753-8541, Japan}
\email{tadachi@yamaguchi-u.ac.jp}

\author{Mayu Tsukamoto}
\address{M.~Tsukamoto: Graduate school of Sciences and Technology for Innovation, Yamaguchi University, 1677-1 Yoshida, Yamaguchi 753-8512, Japan}
\email{tsukamot@yamaguchi-u.ac.jp}

\subjclass[2020]{Primary 16S50, Secondly 16L30}
\keywords{quasi-hereditary ring, Morita context ring, block extension}

\begin{abstract}
In this paper, we give a sufficient condition for Morita context rings to be quasi-hereditary.
As an application, we show that each block extension of a quasi-hereditary ring is also quasi-hereditary.
\end{abstract}
\maketitle

\section{Introduction}

Cline, Parshall and Scott introduced the notion of quasi-hereditary algebras to deal the representation theory of complex Lie algebras and algebraic groups. 
Ever since their introduction, quasi-hereditary algebras have extensively been studied from the viewpoint of the representation theory of algebras. 
In particular, the works of Dlab and Ringel gave a ring theoretic approach to quasi-hereditary algebras and generally quasi-hereditary semiprimary rings (\cite{DR891, DR892, DR893, R91}). 

Motivated by Morita theorem on equivalences of module categories, Bass (\cite{B62}) introduced the notion of Morita contexts. 
Recall that a Morita context is a $6$-tuple $\mathcal{M}:=(R,S,M,N,\varphi, \psi)$, where $R,S$ are associative unital rings, $M$ is an $R$-$S$-bimodule, $N$ is an $S$-$R$-bimodule, $\varphi:M\otimes_{S}N\to R$ is an $R$-$R$-bimodule homomorphism and $\psi:N\otimes_{R}M\to S$ is an $S$-$S$-bimodule homomorphism.
A Morita context ring is a $2\times 2$ matrix ring $\Lambda(\mathcal{M}):=\left(\begin{smallmatrix} R&M \\N&S\end{smallmatrix}\right)$ associated to a Morita context $\mathcal{M}$. 
It is known that using Morita context rings, all quasi-hereditary algebras can be constructed from semisimple algebras (\cite{PS88, DR893}). 
In this paper, we give a sufficient condition on a Morita context $\mathcal{M}$ so that $\Lambda(\mathcal{M})$ is quasi-hereditary. 

\begin{theorem}[{Theorem \ref{thm:main2}}]\label{thm:intro-thm1}
Let $\Lambda(\mathcal{M})$ be a Morita context ring which is semiprimary.
Assume that the following four conditions are satisfied.
\begin{itemize}
\item[(a)] $R/\operatorname{Im}\varphi$ and $S$ are quasi-hereditary rings.
\item[(b)] $M$ is a projective $S$-module.
\item[(c)] $N$ is a projective $S^{\mathrm{op}}$-module.
\item[(d)] $\varphi$ is a monomorphism.
\end{itemize}
Then $\Lambda(\mathcal{M})$ is a quasi-hereditary ring.
\end{theorem}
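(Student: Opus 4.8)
The plan is to build a heredity chain of $\Lambda:=\Lambda(\mathcal M)$ by induction on the length of a heredity chain of $S$, peeling off one heredity ideal at a time from the bottom-right corner. Let $f:=\left(\begin{smallmatrix}0&0\\0&1\end{smallmatrix}\right)$, so that $f\Lambda f\cong S$ and, by the corner-ring identity $\operatorname{rad}(f\Lambda f)=f(\operatorname{rad}\Lambda)f$, the radical of $S$ is recovered inside $\Lambda$. Since $S$ is quasi-hereditary I may fix a heredity ideal $K=S\epsilon S$ of $S$, with $\epsilon^2=\epsilon$ and $\epsilon(\operatorname{rad}S)\epsilon=0$, and lift $\epsilon$ to the idempotent $e:=\left(\begin{smallmatrix}0&0\\0&\epsilon\end{smallmatrix}\right)\in\Lambda$. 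I will show that $\Lambda e\Lambda$ is a heredity ideal of $\Lambda$ and that $\Lambda/\Lambda e\Lambda$ is again a Morita context ring satisfying (a)--(d) whose bottom-right corner $S/K$ has a strictly shorter heredity chain. Concatenating the chains then closes the induction, the base case $S=0$ giving $\Lambda\cong R\cong R/\operatorname{Im}\varphi$, which is quasi-hereditary by~(a).

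To prove that $\Lambda e\Lambda$ is a heredity ideal I would invoke the classical idempotent characterization: $\Lambda e\Lambda$ is heredity exactly when $e(\operatorname{rad}\Lambda)e=0$ and the multiplication map $\mu\colon\Lambda e\otimes_{e\Lambda e}e\Lambda\to\Lambda e\Lambda$ is bijective. The first condition is immediate, as $e(\operatorname{rad}\Lambda)e=\epsilon(\operatorname{rad}S)\epsilon=0$. For the second, $e\Lambda e\cong\epsilon S\epsilon$ is semisimple and $\mu$ splits into its four Peirce components. Using $M\epsilon\cong M\otimes_S S\epsilon$, $\epsilon N\cong\epsilon S\otimes_S N$, and the heredity isomorphism $S\epsilon\otimes_{\epsilon S\epsilon}\epsilon S\xrightarrow{\sim}K$ for $S$, the $(2,2)$-component is the defining isomorphism of $K$, the off-diagonal components become the multiplication maps $M\otimes_S K\to MK$ and $K\otimes_S N\to KN$, and the $(1,1)$-component becomes $M\otimes_S K\otimes_S N\to R$. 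This is the step I expect to be the crux, and it is precisely where (b), (c), (d) are consumed: projectivity (hence flatness) of $M_S$ and ${}_SN$ makes $M\otimes_S K\hookrightarrow M$ and $K\otimes_S N\hookrightarrow N$ injective, whence $M\otimes_S K\otimes_S N\hookrightarrow M\otimes_S N$, and composing with the monomorphism $\varphi$ from~(d) yields injectivity of the $(1,1)$-component. Thus $\mu$ is bijective and $\Lambda e\Lambda$ is heredity.

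Finally I would identify $\Lambda e\Lambda=\left(\begin{smallmatrix}\varphi(M\otimes_SK\otimes_SN)&MK\\KN&K\end{smallmatrix}\right)$, so that $\Lambda/\Lambda e\Lambda$ is the Morita context ring $\Lambda(\mathcal M')$ with corners $R':=R/\varphi(M\otimes_SK\otimes_SN)$ and $S':=S/K$, bimodules $M':=M/MK$ and $N':=N/KN$, and induced maps $\varphi',\psi'$. Base change of projectives keeps $M'_{S'}$ and ${}_{S'}N'$ projective, so (b) and (c) persist; the same flatness together with injectivity of $\varphi$ yields both $R'/\operatorname{Im}\varphi'\cong R/\operatorname{Im}\varphi$ and the injectivity of $\varphi'$, so (a) and (d) persist; and $S'=S/K$ is quasi-hereditary with a heredity chain one step shorter. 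A quotient of a semiprimary ring is semiprimary, so the induction hypothesis applies to $\Lambda(\mathcal M')$; gluing its heredity chain on top of $\Lambda e\Lambda$ produces a heredity chain of $\Lambda$, and therefore $\Lambda(\mathcal M)$ is quasi-hereditary. The only real work beyond the crux above is the routine bimodule bookkeeping needed to confirm that $\Lambda/\Lambda e\Lambda$ is genuinely a Morita context ring and that the four hypotheses transfer.
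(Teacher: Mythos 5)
Your proposal is correct, but it follows a genuinely different route from the paper. The paper works with the single corner idempotent $e=\left(\begin{smallmatrix}0&0\\0&1_{S}\end{smallmatrix}\right)$, uses Green's description of modules over $\Lambda(\mathcal{M})$ together with (b), (c), (d) to show that $\Lambda e\Lambda$ is projective as a left and as a right $\Lambda$-module, and then invokes a separately proved ring-theoretic criterion (Theorem \ref{thm:main1}): if $\Lambda/\Lambda e\Lambda$ and $e\Lambda e$ are quasi-hereditary and $\Lambda e\Lambda\in\mathsf{Proj}\Lambda\cap\mathsf{Proj}\Lambda^{\mathrm{op}}$, then $\Lambda$ is quasi-hereditary; the heredity of the intermediate ideals $\Lambda\varepsilon_{j}\Lambda$ below $\Lambda e\Lambda$ is obtained there via Lemma \ref{lem:key-idem}, whose engine is the two-sided projectivity of $\Lambda e\Lambda$. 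You instead bypass Theorem \ref{thm:main1} entirely: you induct on the length of a heredity chain of $S$, peel off one heredity ideal $K=S\epsilon S$ at a time, verify the Dlab--Ringel multiplication-map criterion for $\Lambda e\Lambda$ with $e=\left(\begin{smallmatrix}0&0\\0&\epsilon\end{smallmatrix}\right)$ by decomposing $\mu$ into Peirce components (this is where flatness of $M_{S}$ and ${}_{S}N$ and injectivity of $\varphi$ enter, exactly as they enter the paper's projectivity computation), and then check that (a)--(d) descend to the quotient Morita context with corners $R/\varphi(M\otimes_{S}K\otimes_{S}N)$ and $S/K$. Both arguments end up producing essentially the same heredity chain of $\Lambda$. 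The paper's route buys a reusable general criterion for semiprimary rings and verifies projectivity only once, for the single ideal $\Lambda e\Lambda$; your route is more self-contained and makes the role of each hypothesis transparent at the level of individual heredity ideals, at the cost of carrying the hypotheses through quotient Morita contexts at every step, which is in effect a re-proof of what Proposition \ref{prop:key-qhs}(2) and Lemma \ref{lem:key-idem} encapsulate. All the steps you flag as routine (the identification of $\Lambda/\Lambda e\Lambda$, the transfer of (a)--(d), the base case $S=0$) do go through as you describe.
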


As an application of Theorem \ref{thm:intro-thm1}, we study block extensions (see Definition \ref{def:block-ext}) of quasi-hereditary rings.
The notion of block extensions of semiperfect rings was introduced by Oshiro (\cite{O90}) in a structure theorem of Harada rings.
As a modern representation theoretic approach, Yamaura (\cite{GY10}) gave a description of quivers with relations of block extensions of finite dimensional algebras. 
Since a block extension is a certain matrix ring, it is realized as a Morita context ring.
Using Theorem \ref{thm:intro-thm1}, we prove that each block extension of a quasi-hereditary ring is also quasi-hereditary. 
 
\begin{theorem}[Theorem \ref{thm:main3}]
Let $R$ be a basic semiprimary ring with complete set $\{ e_{1}, e_{2}, \ldots, e_{n}\}$ of primitive orthogonal idempotents of $R$ and let $\ell_{1}, \ell_{2}, \ldots, \ell_{n}$ be positive integers. 
If $R$ is quasi-hereditary, then so is the block extension $B(R;\ell_{1}, \ell_{2},\ldots,\ell_{n})$.
\end{theorem}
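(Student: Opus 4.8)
The plan is to realize the block extension $\Lambda := B(R;\ell_{1},\ell_{2},\ldots,\ell_{n})$ as a Morita context ring and then apply Theorem~\ref{thm:intro-thm1}, arguing by induction on the total length $\ell := \ell_{1}+\ell_{2}+\cdots+\ell_{n}$. When $\ell=n$, that is, when every $\ell_{i}=1$, the block extension reduces to $R$ itself, which is quasi-hereditary by hypothesis; this is the base case. For the inductive step I fix an index $i$ with $\ell_{i}\geq 2$ and single out the primitive idempotent $e$ of $\Lambda$ lying at the end of the $i$-th chain (the vertex created as the ``top'' copy of $e_{i}$). Writing $f:=1-e$, the Peirce decomposition of $\Lambda$ with respect to $e,f$ exhibits $\Lambda$ as the Morita context ring
\[
\Lambda\cong\begin{pmatrix} e\Lambda e & e\Lambda f \\ f\Lambda e & f\Lambda f\end{pmatrix},
\]
with $R':=e\Lambda e$, $S:=f\Lambda f$, $M:=e\Lambda f$, $N:=f\Lambda e$, and $\varphi,\psi$ the multiplication maps of $\Lambda$.

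Next I would identify these pieces using the recursive structure of block extensions (Definition~\ref{def:block-ext}). Since $e$ is a primitive idempotent lying over $e_{i}$, the corner $R'=e\Lambda e$ is isomorphic to the local ring $e_{i}Re_{i}$. Deleting the top vertex of the $i$-th chain shortens only that chain, so $S=f\Lambda f\cong B(R;\ell_{1},\ldots,\ell_{i}-1,\ldots,\ell_{n})$ is again a block extension of $R$, now of strictly smaller total length; by the induction hypothesis $S$ is quasi-hereditary, which gives the second half of condition~(a). For the first half I compute $\operatorname{Im}\varphi=e\Lambda f\Lambda e$: since every nontrivial cycle at the top vertex must leave and re-enter through the remaining vertices, and since $\Lambda$ is basic (so $e\Lambda f\subseteq\operatorname{rad}\Lambda$), this product should equal $e_{i}\operatorname{rad}(R)e_{i}=\operatorname{rad}(e_{i}Re_{i})$. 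Consequently $R'/\operatorname{Im}\varphi\cong e_{i}Re_{i}/\operatorname{rad}(e_{i}Re_{i})$ is a division ring, hence quasi-hereditary, establishing condition~(a).

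It then remains to check (b), (c) and (d). Conditions~(b) and~(c) --- that $M=e\Lambda f$ is projective over $S$ and $N=f\Lambda e$ is projective over $S^{\mathrm{op}}$ --- should follow from the defining feature of a block extension, namely that it is assembled by stacking the indecomposable projectives of $R$ along the chains, so that the off-diagonal Peirce bimodules are direct sums of projective $S$- (respectively $S^{\mathrm{op}}$-) modules. Condition~(d), that $\varphi$ is a monomorphism, I expect to be the main obstacle: it asserts that the multiplication $e\Lambda f\otimes_{S}f\Lambda e\to e\Lambda e$ is injective, equivalently that no relations of $\Lambda$ collapse products of paths leaving and entering the top vertex. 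I would prove this by writing $M$ and $N$ explicitly in terms of the components $e_{i}Re_{j}$ and the chain data, and checking on these generators that $\varphi$ is the honest, injective multiplication of genuinely stacked copies.

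Finally I would record that $\Lambda$ is semiprimary whenever $R$ is --- its radical is nilpotent and its top is semisimple, both inherited from $R$ through the finite matrix construction --- so that the hypotheses of Theorem~\ref{thm:intro-thm1} are met. The theorem then yields that $\Lambda=B(R;\ell_{1},\ell_{2},\ldots,\ell_{n})$ is quasi-hereditary, completing the induction.
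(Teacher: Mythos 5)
Your reduction to Theorem \ref{thm:main2} and your induction scheme are set up correctly (the identifications $e\Lambda e\cong e_{i}Re_{i}$, $f\Lambda f\cong B(R;\ell_{1},\ldots,\ell_{i}-1,\ldots,\ell_{n})$ and $\operatorname{Im}\varphi=J(e_{i}Re_{i})$ all check out), but the step you wave through --- conditions (b) and (c) --- is where the argument actually breaks, and it cannot be repaired for your choice of idempotent. If $e$ is the top vertex of the $i$-th chain, then $M=e\Lambda f$ is isomorphic to the row of $S$ at the next vertex down, so (b) holds; but $N=f\Lambda e$ has entries $e_{s}Re_{i}$ ($s\neq i$) and $J(R_{i})$, i.e.\ it is a block-extension avatar of $\operatorname{rad}(Re_{i})$, and this need not be projective over $S$ even when $R$ is quasi-hereditary. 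Concretely, let $R=kQ/J^{2}$ with $Q\colon 1\to2\to3\to4\to5$ (radical square zero; this is quasi-hereditary, peeling off sources) and take $\ell_{3}=2$, all other $\ell_{j}=1$, so you are forced to work at $i=3$ and $S\cong R$. For the top vertex, $N\cong e_{2}Re_{3}=k\alpha_{23}$ is the simple left $S$-module at vertex $2$, whose projective cover $Se_{2}$ is two-dimensional, so (c) fails; switching to the bottom vertex merely trades (c) for (b), since then $M\cong e_{3}Re_{4}=k\alpha_{34}$ is the non-projective simple right $S$-module at vertex $4$. So neither end of the chain satisfies the hypotheses of Theorem \ref{thm:main2}, and the one-vertex-at-a-time induction collapses. (Incidentally, the step you singled out as the main obstacle, injectivity of $\varphi$, is the harmless one: once $M\cong e'S$, one gets $M\otimes_{S}N\cong e'N=J(R_{i})\hookrightarrow R_{i}$.)

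The paper avoids this by cutting along the heredity structure of $R$ rather than along an arbitrary chain: by Proposition \ref{prop:DR2-710} and \cite[Proposition 1.3]{UY90} one may assume $Re_{n}R$ is a heredity ideal with $Re_{n}R\in\mathsf{Proj}R\cap\mathsf{Proj}R^{\mathrm{op}}$ and $e_{n}J(R)e_{n}=0$, and then one splits off the \emph{entire} diagonal block $B(n,n)$ at once. Projectivity of $Re_{n}R$ on both sides is exactly what makes $B_{12}$ and $B_{21}$ projective over $B(n,n)$ (via Lemma \ref{lem:keyDRT}), the vanishing $e_{n}J(R)e_{n}=0$ makes $B(n,n)$ an upper triangular matrix ring over a ring with zero radical, hence quasi-hereditary by Example \ref{ex:utm}, and the quotient is the block extension of the quasi-hereditary ring $R/Re_{n}R$, handled by induction on $n$. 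If you want to salvage your approach, you must let the heredity chain of $R$ dictate which block you remove; the total length $\ell$ is the wrong induction parameter.
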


\subsection*{Notation and convention}

Throughout this paper, $R$ is an associative unital ring and $J(R)$ is the Jacobson radical of $R$.
Let $R^{\mathrm{op}}$ denote the opposite ring of $R$.
By a module, we mean a right module unless otherwise stated.
Let $\mathsf{Mod}R$ denote the category of all $R$-modules, $\mathsf{Proj}R$ the category of all projective $R$-modules and $\mathsf{Add}M$ the category of all $R$-modules isomorphic to a direct summand of an arbitrary direct sum of copies of an $R$-module $M$.
For two integers $i\leq j$, put $[i,j]:=\{ k \in \mathbb{Z}\mid i\leq k\leq j\}$.

\section{Quasi-hereditary rings}

In this section, we recall the definition of quasi-hereditary (semiprimary) rings and give a sufficient condition for a semiprimary ring to be quasi-hereditary.
We begin with recalling the definition of semiprimary rings. 
For details, see \cite{AF91}. 

\begin{definition}
A ring $R$ is said to be \emph{semiprimary} if the Jacobson radical $J(R)$ is a nilpotent ideal and $R/J(R)$ is a semisimple (artin) ring.
\end{definition}

Note that, if $R$ is a semiprimary ring and $e$ is an idempotent of $R$, then $R/ReR$ and $eRe$ are also semiprimary (for example, see \cite[\S 27 and \S 28]{AF91}).

To define quasi-hereditary rings, we recall the notion of idempotent ideals and heredity ideals.
Let $R$ be an arbitrary ring. 
A two-sided ideal $I$ of $R$ is called an \emph{idempotent ideal} if $I^{2}=I$ holds.
We collect basic properties of idempotent ideals.

\begin{lemma}[{\cite[Statement 6]{DR892}}]\label{lem:DR2-6}
If $e$ is an idempotent of $R$, then $ReR$ is an idempotent ideal.
Conversely, if $R$ is semiprimary and $I$ is an idempotent ideal, then there exists an idempotent $e$ of $R$ such that $I=ReR$.
\end{lemma}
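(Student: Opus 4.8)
The plan is to treat the two assertions separately, since they require quite different inputs. For the first, I would note that $ReR$ is visibly a two-sided ideal, so the only content is idempotency. The inclusion $(ReR)^{2}\subseteq ReR$ is automatic because $ReR$ is an ideal. For the reverse inclusion I would exploit $e=e^{2}$: an arbitrary generator $res$ of $ReR$ factors as $(re)(es)$, where $re=re\cdot 1\in ReR$ and $es=1\cdot es\in ReR$, so that $res=(re)(es)\in (ReR)^{2}$. Hence $ReR\subseteq (ReR)^{2}$ and $ReR=(ReR)^{2}$.

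For the converse, the strategy is to manufacture the idempotent in the semisimple quotient and then lift it. Write $\bar R:=R/J(R)$ and let $\bar{(-)}$ denote images in $\bar R$. Since $R$ is semiprimary, $\bar R$ is semisimple artinian, so its two-sided ideal $\bar I$ is generated by a central idempotent; thus there is a central idempotent $\bar e\in\bar R$ with $\bar I=\bar R\bar e\bar R$. As $J(R)$ is nilpotent, idempotents lift along the projection $R\to\bar R$, so I may pick an idempotent $e\in R$ with image $\bar e$. By the first part $ReR$ is an idempotent ideal, and comparing images in $\bar R$ yields $\overline{ReR}=\bar R\bar e\bar R=\bar I$; equivalently,
\[
I+J(R)=ReR+J(R).
\]

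It remains to promote this congruence modulo $J(R)$ to a genuine equality $I=ReR$, and this is the step I expect to be the crux. The mechanism is that both ideals are idempotent while $J(R)$ is nilpotent, allowing a Nakayama-type iteration. Starting from $I\subseteq ReR+J(R)$, I would multiply on the right by $I$ and use $I=I^{2}$ together with $I\cdot ReR\subseteq ReR$ to get $I\subseteq ReR+I\,J(R)$; substituting this estimate into itself gives $I\subseteq ReR+I\,J(R)^{m}$ for every $m$, and nilpotency of $J(R)$ forces $I\subseteq ReR$. Running the symmetric argument with the idempotent ideal $ReR$ in place of $I$ produces $ReR\subseteq I+ReR\cdot J(R)^{m}\subseteq I$, and the two inclusions combine to $I=ReR$. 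The only nonformal ingredients are the fact that a two-sided ideal of a semisimple ring is generated by a central idempotent and the lifting of idempotents modulo the nilpotent ideal $J(R)$, both standard for semiprimary rings.
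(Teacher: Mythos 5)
Your argument is correct and complete: the first half is the routine factorization $res=(re)(es)$, and the second half (central idempotent in $R/J(R)$, lifting modulo the nilpotent radical, then upgrading $I+J(R)=ReR+J(R)$ to $I=ReR$ via $I=I^{2}$, $(ReR)^{2}=ReR$ and $J(R)^{m}=0$) is sound. The paper itself gives no proof of this lemma, citing \cite[Statement 6]{DR892} instead, and your argument is essentially the standard one from that reference, so there is nothing to reconcile.
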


\begin{lemma}\label{lem:lift-idem}
Let $R$ be an arbitrary ring and $I'\subseteq I$ two-sided ideals of $R$.
If $I'\subseteq R$ and $I/I'\subseteq R/I'$ are idempotent ideals, then so is $I\subseteq R$.
\end{lemma}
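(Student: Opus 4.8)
The plan is to reduce the whole statement to the single identity $I^{2}=I$ by unwinding the two idempotency hypotheses. Since $I\subseteq R$ is a two-sided ideal, the inclusion $I^{2}\subseteq I$ is automatic, so the entire task is to establish the reverse inclusion $I\subseteq I^{2}$.

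First I would record a consequence of the hypothesis on $I'$. Because $I'$ is idempotent and $I'\subseteq I$, we get $I'=(I')^{2}\subseteq I\cdot I=I^{2}$, and hence $I'\subseteq I^{2}$. This small observation is what lets us pass freely between $I^{2}$ and $I^{2}+I'$ below, and it is really the only place the idempotency of $I'$ is used.

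Next I would translate the hypothesis that $I/I'$ is idempotent in $R/I'$. Writing $\pi\colon R\to R/I'$ for the canonical surjection, the ideal product satisfies $(I/I')^{2}=\pi(I)\pi(I)=\pi(I^{2})=(I^{2}+I')/I'$, since a typical element of $(I/I')^{2}$ is a coset $\sum_{k}a_{k}b_{k}+I'$ with $a_{k},b_{k}\in I$, whose representative lies in $I^{2}$. Hence the equality $(I/I')^{2}=I/I'$ is equivalent to $I^{2}+I'=I$.

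Finally I would combine the two steps: using $I'\subseteq I^{2}$ from the first step, the equation $I=I^{2}+I'$ collapses to $I=I^{2}$, which is precisely the assertion that $I$ is an idempotent ideal of $R$. There is no serious obstacle in this argument; the only points requiring a little care are the coset computation $(I/I')^{2}=(I^{2}+I')/I'$ and the observation $I'\subseteq I^{2}$, which together render the proof purely formal.
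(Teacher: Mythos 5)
Your proof is correct and takes essentially the same route as the paper's: both use the idempotency of $I/I'$ to get $I=I^{2}+I'$ (the paper phrases this as $i-\sum jk\in I'$ for $i\in I$) and then absorb $I'=(I')^{2}\subseteq I^{2}$ into $I^{2}$. The only difference is cosmetic --- you work with ideal products and cosets, while the paper writes out the identical computation element by element.
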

\begin{proof}
It is enough to show $I\subseteq I^{2}$.
Let $i\in I$. 
By $(I/I')^{2}=I/I'$, there exist $j,k\in I$ such that $i+I'=\sum(j+I')(k+I')$.
Thus $i-\sum jk\in I'=(I')^{2}$.
There exist $l,m\in I'$ such that $i-\sum jk=\sum lm$.
Therefore $i=\sum jk+\sum lm\in I^{2}$. 
\end{proof}

We frequently use the following well-known results (see \cite[Statement 7]{DR892}, \cite[Lemma 2]{DR893} and \cite[Lemma 3.8]{T20}).

\begin{lemma}\label{lem:keyDRT}
Let $R$ be an arbitrary ring and let $e\in R$ be an idempotent with $ReR\in \mathsf{Proj}R$.
Then the following statements hold.
\begin{itemize}
\item[(1)] $ReR\in \mathsf{Add}(eR)$. 
\item[(2)] $Re\in \mathsf{Proj}(eRe)$.
\item[(3)] The multiplication $\mu:Re\otimes_{eRe}eR\to ReR$ is an isomorphism (as an $R$-$R$-bimodule).
\item[(4)] For each idempotent $f\in R$, the multiplication $\nu: ReR\otimes_{R}RfR\rightarrow ReRfR$ is an isomorphism (as an $R$-$R$-bimodule).
\end{itemize}
\end{lemma}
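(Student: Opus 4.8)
The plan is to deduce (1)--(3) from a single structural fact, namely that $ReR$, being projective, embeds as a direct summand of a coproduct of copies of $eR$, and then to push this through the functors relating $\mathsf{Mod}R$ and $\mathsf{Mod}(eRe)$ given by the bimodule $eR$; statement (4) I would treat separately as a pure flatness argument. For (1), I would first note that $ReR=\sum_{r\in R}reR$, so that the maps $eR\to ReR,\ x\mapsto re\cdot x$ (which are right $R$-linear with image $reR$) assemble into a right $R$-module epimorphism $\bigoplus_{r\in R}eR\twoheadrightarrow ReR$. Since $ReR\in\mathsf{Proj}R$ by hypothesis, this epimorphism splits, so $ReR$ is a direct summand of $\bigoplus_{r\in R}eR$; that is, $ReR\in\mathsf{Add}(eR)$.

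With (1) in hand, both (2) and (3) become formal consequences of the adjunction between $\mathsf{Mod}R$ and $\mathsf{Mod}(eRe)$. For (2), I would apply the additive functor $\operatorname{Hom}_{R}(eR,-)\colon\mathsf{Mod}R\to\mathsf{Mod}(eRe)$, which commutes with arbitrary direct sums because $eR$ is finitely generated. Via the natural isomorphism $\operatorname{Hom}_{R}(eR,M)\cong Me$ of right $eRe$-modules (sending $f\mapsto f(e)$), and the elementary identity $ReRe=Re$ (indeed $re=(re)e\in(ReR)e$), this functor carries $eR$ to $eRe$ and $ReR$ to $Re$. Applying it to the membership $ReR\in\mathsf{Add}(eR)$ from (1) yields $Re\in\mathsf{Add}(eRe)=\mathsf{Proj}(eRe)$. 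For (3), I would recognise $\mu$ as the counit at $ReR$ of the adjunction $(-\otimes_{eRe}eR)\dashv\operatorname{Hom}_{R}(eR,-)$: the assignment $M\mapsto\big(Me\otimes_{eRe}eR\xrightarrow{\ \mathrm{mult}\ }M\big)$ is a natural transformation of additive, coproduct-preserving endofunctors of $\mathsf{Mod}R$ that is an isomorphism on $M=eR$ (where it is the canonical $eRe\otimes_{eRe}eR\cong eR$), hence on every object of $\mathsf{Add}(eR)$ as a retract of an isomorphism. Since $ReR\in\mathsf{Add}(eR)$ and $(ReR)e=Re$, this counit is exactly $\mu$, which is therefore an isomorphism.

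For (4), which needs none of (1)--(3), I would argue by flatness. As $ReR$ is projective, hence flat, as a right $R$-module, tensoring the inclusion $\iota\colon RfR\hookrightarrow R$ of left $R$-modules with $ReR$ preserves injectivity, so the composite $ReR\otimes_{R}RfR\xrightarrow{\ \mathrm{id}\otimes\iota\ }ReR\otimes_{R}R\xrightarrow{\ \sim\ }ReR$ is a monomorphism. This composite equals $\nu$ followed by the inclusion $ReRfR\hookrightarrow ReR$, so $\nu$ is injective; since $\nu$ is surjective by the definition $ReRfR=ReR\cdot RfR$, it is an isomorphism.

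I expect the only genuinely load-bearing step to be (1): once $ReR$ is placed in $\mathsf{Add}(eR)$, statements (2) and (3) follow by applying, respectively, the functor $\operatorname{Hom}_{R}(eR,-)$ and the counit of its defining adjunction, while (4) is an independent and short flatness observation. The two points requiring care are the identity $Re=ReRe$, which reconciles $\operatorname{Hom}_{R}(eR,ReR)\cong(ReR)e$ with the module $Re$ appearing in the statement, and the fact that $\operatorname{Hom}_{R}(eR,-)$ commutes with the possibly infinite coproduct $\bigoplus_{r\in R}eR$ produced in (1) --- precisely where the finite generation of $eR$ is used.
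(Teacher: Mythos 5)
Your proof is correct. For parts (1)--(3) it follows essentially the paper's own route: you produce a split epimorphism from a coproduct of copies of $eR$ onto $ReR$ (the paper builds the same surjection as $(eRe)^{\oplus}\otimes_{eRe}eR\to Re\otimes_{eRe}eR\to ReR$), deduce (2) by applying the summand-preserving functor $(-)e$ (the paper writes it as $-\otimes_{R}Re$, you as $\operatorname{Hom}_{R}(eR,-)$, with the finite-generation caveat correctly noted), and prove (3) by observing that the multiplication $Me\otimes_{eRe}eR\to M$ is a natural transformation of coproduct-preserving functors that is an isomorphism at $M=eR$, hence at every object of $\mathsf{Add}(eR)$ --- the paper carries out exactly this reduction by hand via the maps $\mu_{eR}$, $\mu_{ReR}$ and $\varphi\otimes eR$, whereas you package it as the counit of the adjunction $(-\otimes_{eRe}eR)\dashv\operatorname{Hom}_{R}(eR,-)$. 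Part (4) is where you genuinely diverge: the paper invokes the Cartan--Eilenberg exact sequence $0\to\operatorname{Tor}^{R}_{2}(R/ReR,R/RfR)\to ReR\otimes_{R}RfR\to ReRfR\to 0$ and kills the $\operatorname{Tor}_{2}$ term by dimension shifting, while you simply tensor the inclusion $RfR\hookrightarrow R$ with the flat module $ReR$ and factor the resulting monomorphism through $\nu$. Your argument is shorter, avoids the external reference, and makes transparent that only flatness (not projectivity) of $ReR$ is needed for (4); the paper's version has the mild advantage of exhibiting the obstruction as a specific $\operatorname{Tor}_{2}$ group. Both are valid, and your identification of (1) as the only load-bearing step matches the structure of the paper's proof.
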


For the convenience of readers, we give a proof.

\begin{proof}
(1) Take an epimorphism $(eRe)^{\oplus}\rightarrow Re$ in $\mathsf{Mod}(eRe)$. Then we have epimorphisms
\begin{align}
eR^{\oplus}\cong (eRe)^{\oplus} \otimes_{eRe}eR\rightarrow Re\otimes_{eRe}eR \rightarrow ReR \notag
\end{align}
in $\mathsf{Mod}R$. 
Since $ReR$ is a projective $R$-module, the composition is a split epimorphism.
Hence $ReR\in \mathsf{Add}(eR)$.

(2) By (1), we have $Re=ReRe\cong ReR\otimes_{R}Re\in \mathsf{Proj}(eRe)$.

(3) The multiplication $\mu_{eR}: eR\otimes_{R}Re\otimes_{eRe}eR\to eRe\otimes_{eRe}eR\to eR$ is an isomorphism. 
By $ReR\in \mathsf{Add}(eR)$, the multiplication
\begin{align}
\mu_{ReR}: ReR\otimes_{R}Re\otimes_{eRe}eR\rightarrow ReR \notag
\end{align}
is an isomorphism.
On the other hand, since $\varphi: Re\rightarrow ReR\otimes_{R}Re$ $(a\mapsto a\otimes e)$ is an isomorphism, so is $\varphi\otimes eR: Re\otimes_{eRe} eR\rightarrow ReR\otimes_{R}Re\otimes_{eRe}eR$.
By $\mu=\mu_{ReR}\circ(\varphi\otimes eR)$, we have the assertion.

(4) By \cite[Chapter VI, Exercises 19]{CE56}, we have a short exact sequence
\begin{align}
0\rightarrow \operatorname{Tor}^{R}_{2}(R/ReR,R/RfR)\rightarrow ReR\otimes_{R}RfR\xrightarrow{\nu} ReRfR\rightarrow 0. \notag
\end{align}
Applying $-\otimes_{R}R/RfR$ to a short exact sequence 
\begin{align}
0\rightarrow ReR\rightarrow R\rightarrow R/ReR\rightarrow 0 \notag
\end{align}
induces an exact sequence
\begin{align}
0=\operatorname{Tor}^{R}_{2}(R,R/RfR)\rightarrow \operatorname{Tor}^{R}_{2}(R/ReR,R/RfR)\rightarrow \operatorname{Tor}^{R}_{1}(ReR,R/RfR).\notag
\end{align}
By $ReR\in \mathsf{Proj}R$, we obtain $\operatorname{Tor}^{R}_{1}(ReR,R/RfR)=0$.
This implies $\operatorname{Tor}^{R}_{2}(R/ReR,R/RfR)=0$.
Hence $\nu$ is an isomorphism.
\end{proof}

An idempotent ideal $H$ of $R$ is called a \emph{heredity ideal} if $H$ is a projective $R$-module and $HJ(R)H=0$. 
If $H=ReR$ holds for some idempotent $e\in R$, then $HJ(R)H=0$ if and only if $eJ(R)e=0$.
We recall the definition of quasi-hereditary rings.

\begin{definition}[{\cite{S87, CPS88, DR892}}]
Let $R$ be a semiprimary ring.
\begin{itemize}
\item[(1)] A chain of two-sided ideals of $R$
\begin{align}\label{eq:chain-hered}
R=H_{0}\supseteq H_{1}\supseteq \cdots \supseteq H_{m-1}\supseteq H_{m}=0
\end{align}
is called a \emph{heredity chain} of $R$ if $H_{i}/H_{i+1}$ is a heredity ideal of $R/H_{i+1}$ for each $i\in [0, m-1]$.
\item[(2)] $R$ is called a \emph{quasi-hereditary ring} if it admits a heredity chain.
\end{itemize}
\end{definition}

The heredity chain \eqref{eq:chain-hered} has the following properties. 
For each $i\in[0,m-1]$, it follows from Lemma \ref{lem:lift-idem} that $H_{i}$ is an idempotent ideal of $R$.
Moreover, by Lemma \ref{lem:DR2-6}, there exists an idempotent $\varepsilon_{i}\in R$ such that $H_{i}=R\varepsilon_{i}R$.
Using \cite[Lemma 1.1]{UY90}, we can take idempotents $\varepsilon_{0},\varepsilon_{1},\ldots, \varepsilon_{m-1}$ satisfying $\varepsilon_{i}\varepsilon_{j}=\varepsilon_{j}\varepsilon_{i}=\varepsilon_{j}$ whenever $i\leq j$.
In the following, we choose such idempotents unless otherwise stated.

\begin{proposition}[{\cite[Statements 7 and 10]{DR892}}]\label{prop:DR2-710}
Let $R$ be a quasi-hereditary ring, that is, there exists a heredity chain
\begin{align}
R=H_{0} \supseteq H_{1} \supseteq \cdots \supseteq H_{m-1} \supseteq H_{m}=0. \notag
\end{align}
such that $H_{i}=R\varepsilon_{i}R$ for some idempotent $\varepsilon_{i}\in R$.
Then $R/R\varepsilon_{i}R$ and $\varepsilon_{i}R\varepsilon_{i}$ are quasi-hereditary for each $i\in[0,m-1]$, and $H_{m-1}\in \mathsf{Proj}R\cap\mathsf{Proj}R^{\mathrm{op}}$. 
\end{proposition}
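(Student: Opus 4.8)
The plan is to treat the three assertions in turn, throughout using the compatible idempotents $\varepsilon_0=1,\varepsilon_1,\dots,\varepsilon_{m-1}$ with $\varepsilon_i\varepsilon_j=\varepsilon_j\varepsilon_i=\varepsilon_j$ for $i\le j$. For the bi-projectivity of $H_{m-1}=R\varepsilon_{m-1}R$, note that $H_{m-1}=H_{m-1}/H_m$ is by definition a heredity ideal of $R/H_m=R$, so it is projective as a right $R$-module and $H_{m-1}\in\mathsf{Proj}R$. For the opposite side, set $c=\varepsilon_{m-1}$; since $cJ(R)c=0$, the ring $cRc$ is semisimple, so $cR$ is projective as a left $cRc$-module and lies in $\mathsf{Add}(cRc)$. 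Applying $Rc\otimes_{cRc}-$ and invoking the bimodule isomorphism $Rc\otimes_{cRc}cR\cong RcR$ of Lemma~\ref{lem:keyDRT}(3) yields $H_{m-1}\in\mathsf{Add}(Rc)$ as left $R$-modules; as $Rc$ is a direct summand of $R$, this gives $H_{m-1}\in\mathsf{Proj}R^{\mathrm{op}}$.

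For $R/R\varepsilon_iR$, I would simply push the whole chain through $R\to R/H_i$. The images $H_j/H_i$ for $0\le j\le i$ form a chain $R/H_i\supseteq\cdots\supseteq 0$, and the third isomorphism theorem identifies $(H_j/H_i)/(H_{j+1}/H_i)$ with $H_j/H_{j+1}$ as an ideal of $(R/H_i)/(H_{j+1}/H_i)\cong R/H_{j+1}$. Since each $H_j/H_{j+1}$ is a heredity ideal of $R/H_{j+1}$, this exhibits a heredity chain of $R/R\varepsilon_iR$.

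For $\varepsilon_iR\varepsilon_i$, which is the main point, set $e=\varepsilon_i$, $A=eRe$, and truncate the chain to $A_j:=eH_je$ for $i\le j\le m$, so that $A_i=A$ (because $e\in H_i$) and $A_m=0$. The goal is to show each $A_j/A_{j+1}$ is a heredity ideal of $A/A_{j+1}$. First, for $x\in eRe\cap H_{j+1}$ one has $x=exe\in eH_{j+1}e$, whence $eRe\cap H_{j+1}=eH_{j+1}e=A_{j+1}$; writing $\bar R=R/H_{j+1}$ and $\bar e=e+H_{j+1}$, this produces a ring isomorphism $A/A_{j+1}\cong\bar e\bar R\bar e$ carrying $A_j/A_{j+1}$ onto $\bar e\bar H_j\bar e$, where $\bar H_j=H_j/H_{j+1}=\bar R\bar\varepsilon_j\bar R$ is a heredity ideal of $\bar R$. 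The decisive observation is that $i\le j$ forces $\bar e\,\bar\varepsilon_j=\bar\varepsilon_j\,\bar e=\bar\varepsilon_j$, so $c:=\bar\varepsilon_j$ is an idempotent of the corner ring $A':=\bar e\bar R\bar e$ and $\bar e\bar H_j\bar e=A'cA'$. This ideal is idempotent by Lemma~\ref{lem:DR2-6}; it satisfies $cJ(A')c=cJ(\bar R)c=0$ via $J(A')=\bar e J(\bar R)\bar e$; and since $\bar H_j\in\mathsf{Add}(c\bar R)$ by Lemma~\ref{lem:keyDRT}(1) with $\bar e c=c$, left multiplication by $\bar e$ followed by $(-)\bar e$ gives $A'cA'\in\mathsf{Add}(cA')$, a direct summand of $A'$, hence projective. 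Thus $A'cA'$ is a heredity ideal and $\{A_j\}_{j=i}^m$ is a heredity chain of $eRe$.

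The hard part is exactly this projectivity of the truncated step: for an arbitrary idempotent $f$ there is no reason for $fHf$ to be projective over $fRf$, because the Schur functor $(-)f$ does not in general preserve projectivity. What makes the argument go through is precisely the compatibility $\varepsilon_i\varepsilon_j=\varepsilon_j\varepsilon_i=\varepsilon_j$: it places the generating idempotent of each truncated step inside the corner ring as a genuine idempotent, so that the projective generator $cA'$ becomes a direct summand of $A'$ rather than merely a relatively projective module. Establishing this compatibility-driven projectivity is where I expect essentially all of the difficulty to lie; the remaining verifications are formal.
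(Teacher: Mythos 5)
Your proof is correct. Note, however, that the paper itself gives no proof of this proposition: it is quoted from Dlab--Ringel \cite[Statements 7 and 10]{DR892}, so there is no in-paper argument to compare against. What you have written is essentially the standard argument from that reference, assembled from the ingredients the paper has already set up: the bi-projectivity of $H_{m-1}$ via $H_{m-1}\cong R\varepsilon_{m-1}\otimes_{\varepsilon_{m-1}R\varepsilon_{m-1}}\varepsilon_{m-1}R$ with $\varepsilon_{m-1}R\varepsilon_{m-1}$ semisimple (this uses Lemma \ref{lem:keyDRT}(3), whose hypothesis $R\varepsilon_{m-1}R\in\mathsf{Proj}R$ you correctly secure first); the quotient case by the third isomorphism theorem; and the corner case by truncating the chain at $e=\varepsilon_i$. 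You correctly identify the one genuinely delicate point, namely that the compatibility $\varepsilon_i\varepsilon_j=\varepsilon_j\varepsilon_i=\varepsilon_j$ (which the paper arranges via \cite[Lemma 1.1]{UY90} just before the proposition) is what makes $\bar\varepsilon_j$ an idempotent of the corner ring $A'=\bar e\bar R\bar e$, so that $\bar e\bar H_j\bar e=A'\bar\varepsilon_jA'$ and the projectivity of $\bar H_j\in\mathsf{Add}(\bar\varepsilon_j\bar R)$ descends to $A'\bar\varepsilon_jA'\in\mathsf{Add}(\bar\varepsilon_jA')$ after applying $(-)\bar e$ and cutting off the direct summand $\bar e(-)$. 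Two small points worth making explicit if this were written out in full: the identifications $eRe\cap H_{j+1}=eH_{j+1}e$ and $J(A')=\bar eJ(\bar R)\bar e$, both of which you state and both of which are standard; and the fact that all rings appearing ($eRe$, $R/H_i$, $\bar e\bar R\bar e$) remain semiprimary, which the paper records at the start of Section 2. No gaps.
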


We give a criterion for a semiprimary ring to be quasi-hereditary.

\begin{theorem}\label{thm:main1}
Let $R$ be a semiprimary ring.
Assume that an idempotent $e\in R$ satisfies the following three conditions.
\begin{itemize}
\item[(a)] $R/ReR$ and $eRe$ are quasi-hereditary rings.
\item[(b)] $ReR$ is a projective $R$-module.
\item[(c)] $ReR$ is a projective $R^{\mathrm{op}}$-module.
\end{itemize}
Then $R$ is a quasi-hereditary ring.
\end{theorem}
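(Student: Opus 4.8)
The plan is to construct a heredity chain of $R$ by induction on the length $\ell$ of a heredity chain $eRe=K_{0}\supseteq K_{1}\supseteq\cdots\supseteq K_{\ell}=0$ of $eRe$, provided by (a). If $\ell=0$, then $eRe=0$, so $e=0$ and $ReR=0$, whence $R=R/ReR$ is quasi-hereditary by (a). For $\ell\geq 1$, let $K:=K_{\ell-1}$ be the smallest nonzero term; it is a heredity ideal of $eRe$, so by Lemma \ref{lem:DR2-6} we may write $K=(eRe)g(eRe)$ for an idempotent $g\in eRe$. Since $g=ege$ we have $eg=ge=g$, so $g$ is also an idempotent of $R$ and $gRg=g(eRe)g$. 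My aim is to show that $RgR$ is a heredity ideal of $R$ and then to descend to $R/RgR$. The condition $gJ(R)g=0$ is immediate: as $J(eRe)=eJ(R)e$ we get $gJ(R)g=gJ(eRe)g$, and this vanishes because $K$ is a heredity ideal of $eRe$.

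The main obstacle is to prove that $RgR$ is projective as an $R$-module. This cannot be obtained from Lemma \ref{lem:keyDRT} alone, which yields such projectivity only once it is already known, so the hypotheses (b) and (c) must enter directly. The idea is to realise $RgR$ via the functor $F:=Re\otimes_{eRe}(-)\otimes_{eRe}eR$. By Lemma \ref{lem:keyDRT}(2), applied to $R$ and to $R^{\mathrm{op}}$, conditions (b) and (c) make $Re$ and $eR$ projective, hence flat, over $eRe$; thus $F$ is exact. Applying $F$ to the inclusion $K\hookrightarrow eRe$ and invoking the multiplication isomorphism $Re\otimes_{eRe}eR\cong ReR$ of Lemma \ref{lem:keyDRT}(3), I would identify $F(K)$ with the image of the multiplication map, which is $Re\cdot K\cdot eR=RgR$, obtaining $F(K)\cong RgR$ as $R$-$R$-bimodules.

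It then remains to see that $F(K)$ is projective over $R$, and this is where the one-sidedness must be handled carefully. Applying Lemma \ref{lem:keyDRT}(3) to the ring $eRe$ and the idempotent $g$ gives a bimodule isomorphism $K\cong(eRe)g\otimes_{gRg}g(eRe)$, and hence
\begin{align}
F(K)\cong Re\otimes_{eRe}(eRe)g\otimes_{gRg}g(eRe)\otimes_{eRe}eR\cong Rg\otimes_{gRg}gR,\notag
\end{align}
using $Re\otimes_{eRe}(eRe)g\cong Rg$ and $g(eRe)\otimes_{eRe}eR\cong gR$. By Lemma \ref{lem:keyDRT}(2) applied to $eRe$ and $g$, the right $gRg$-module $(eRe)g$ is projective. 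Since $Re$ is projective over $eRe$ (Lemma \ref{lem:keyDRT}(2) for $R$), tensoring a right $eRe$-module decomposition of $Re$ with the $(eRe,gRg)$-bimodule $(eRe)g$ shows that $Rg\cong Re\otimes_{eRe}(eRe)g$ is projective over $gRg$; it is essential to split the right factor $Re$ rather than $K$ itself, as only the former remains within right $eRe$-modules. Consequently $F(K)\cong Rg\otimes_{gRg}gR\in\mathsf{Add}(gR)\subseteq\mathsf{Proj}R$, so $RgR$ is projective and therefore a heredity ideal of $R$.

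Finally I would pass to the semiprimary ring $R':=R/RgR$ and verify the hypotheses for the idempotent $\bar{e}$. One computes $\bar{e}R'\bar{e}=eRe/K$, which inherits from $eRe$ a heredity chain of length $\ell-1$ and is thus quasi-hereditary, while $R'/R'\bar{e}R'=R/ReR$ is quasi-hereditary by (a); this gives (a) for $(R',\bar{e})$. For (b) and (c), base-changing $ReR\in\mathsf{Add}(eR)$ of Lemma \ref{lem:keyDRT}(1) along $-\otimes_{R}R'$ yields $R'\bar{e}R'=ReR/RgR\in\mathsf{Add}(\bar{e}R')\subseteq\mathsf{Proj}R'$, and symmetrically over $R'^{\mathrm{op}}$, where the identification $ReR\otimes_{R}R'\cong ReR/RgR$ uses $ReR\cdot RgR=RgR$ (which follows from $ReRg=Rg$). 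By the inductive hypothesis $R'$ is quasi-hereditary; lifting its heredity chain to $R$ via Lemma \ref{lem:lift-idem} and appending the heredity ideal $RgR$ produces a heredity chain of $R$. The single delicate point throughout is the projectivity transport of the third paragraph.
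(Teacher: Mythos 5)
Your proposal is correct and follows essentially the same route as the paper: the paper concatenates the lifted heredity chain of $R/ReR$ with the chain of ideals $R\varepsilon_jR$ coming from a heredity chain of $eRe$ (Proposition \ref{prop:key-qhs}), while you peel off the bottom heredity ideal of $eRe$ one step at a time by induction, which is the same argument reorganized. Your third paragraph is in substance a reproof of the paper's Lemma \ref{lem:key-idem}, transporting projectivity through $Re\otimes_{eRe}(-)\otimes_{eRe}eR$ and landing in $\mathsf{Add}(gR)$ rather than via the chain of multiplication isomorphisms $ReR\otimes_RRfR\otimes_RReR\cong RfR$, and it is carried out correctly.
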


In the rest of this section, we show Theorem \ref{thm:main1}.
The following lemma plays an important role.

\begin{lemma}\label{lem:key-idem}
Let $R$ be an arbitrary ring and let $e, f\in R$ be idempotents with $ef=fe=f$. 
Assume $ReR\in \mathsf{Proj}R\cap\mathsf{Proj}R^{\mathrm{op}}$ and $eRfRe\in \mathsf{Proj}(eRe)$.
Then $RfR\in \mathsf{Proj}R$.
\end{lemma}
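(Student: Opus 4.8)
The plan is to work inside the corner ring $A := eRe$, in which $f$ is an idempotent (since $f = ef = fe = efe$) and the hypothesis $eRfRe \in \mathsf{Proj}(eRe)$ reads $AfA \in \mathsf{Proj}A$, where $AfA = (eRe)f(eRe)$ is the idempotent ideal of $A$ generated by $f$. The strategy is to first produce an $R$-$R$-bimodule isomorphism $RfR \cong Rf \otimes_{fRf} fR$ and then read off projectivity from it. Throughout I would apply Lemma \ref{lem:keyDRT} to the pair $(R,e)$, which is licensed by $ReR \in \mathsf{Proj}R \cap \mathsf{Proj}R^{\mathrm{op}}$, and to the pair $(A,f)$, which is licensed by $AfA \in \mathsf{Proj}A$.

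First I would establish the bimodule isomorphism $RfR \cong Re \otimes_A AfA \otimes_A eR$. By Lemma \ref{lem:keyDRT}(2) and its opposite, the modules $Re$ and $eR$ are projective, hence flat, over $A$ on the appropriate side, so the functor $Re \otimes_A - \otimes_A eR$ is exact. Applying it to the inclusion $AfA \hookrightarrow A$ and composing with the multiplication isomorphism $Re \otimes_A eR \cong ReR$ of Lemma \ref{lem:keyDRT}(3) yields an injection $Re \otimes_A AfA \otimes_A eR \hookrightarrow ReR$ whose image is the additive span $Re \cdot (AfA) \cdot eR$. A direct computation using $ef = fe = f$ identifies this span with $RfR$: every generator clearly lies in $RfR$, and conversely any $rfr' = (re)f(er')$ lies in the span. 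This gives the desired bimodule isomorphism.

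Next I would collapse the outer tensor factors. Lemma \ref{lem:keyDRT}(3) for $(A,f)$ gives an $A$-$A$-bimodule isomorphism $AfA \cong Af \otimes_{fAf} fA$, and since $f$ and $e-f$ are orthogonal idempotents summing to $1_A = e$, the summand identifications $Re \otimes_A Af \cong Rf$ and $fA \otimes_A eR \cong fR$ hold. Substituting these, together with $fAf = fRf$, produces $RfR \cong Rf \otimes_{fRf} fR$ as $R$-$R$-bimodules. It then remains to see that $Rf$ is projective as a right $fRf$-module. For this I would use $Rf \cong Re \otimes_A Af$ together with $Re \in \mathsf{Proj}A$ (Lemma \ref{lem:keyDRT}(2) for $(R,e)$): writing $Re$ as a direct summand of a free right $A$-module shows $Re \otimes_A Af \in \mathsf{Add}(Af)$ as right $fRf$-modules, while $Af \in \mathsf{Proj}(fRf)$ by Lemma \ref{lem:keyDRT}(2) for $(A,f)$. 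Hence $Rf \in \mathsf{Proj}(fRf)$, and therefore $RfR \cong Rf \otimes_{fRf} fR \in \mathsf{Add}(fR) \subseteq \mathsf{Proj}R$, as required.

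The main obstacle is avoiding a circular appeal to Lemma \ref{lem:keyDRT} for the pair $(R,f)$: parts (2) and (3) of that lemma applied to $(R,f)$ would require precisely the conclusion $RfR \in \mathsf{Proj}R$ that is to be proved. The whole point of descending to $A = eRe$ is that there the analogous input $AfA \in \mathsf{Proj}A$ is available by hypothesis, so Lemma \ref{lem:keyDRT} may be invoked for $(A,f)$ legitimately; the projectivity of $RfR$ is then assembled from the projectivity of $Re$ over $A$ and of $Af$ over $fRf$, rather than being assumed.
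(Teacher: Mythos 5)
Your proof is correct and follows essentially the same route as the paper's: both identify $RfR$ with $Re\otimes_{eRe}(eRfRe)\otimes_{eRe}eR$ using Lemma \ref{lem:keyDRT} for $(R,e)$ and then transport the hypothesis $eRfRe\in\mathsf{Proj}(eRe)$ back up to $R$. The only cosmetic difference is that the paper verifies this identification by a chain of multiplication isomorphisms (Lemma \ref{lem:keyDRT}(3),(4) and their duals) and concludes directly that $Re\otimes_{eRe}eRfRe\otimes_{eRe}eR$ is projective, whereas you establish it via flatness of $Re$ and $eR$ over $eRe$ and pass through the further factorization $RfR\cong Rf\otimes_{fRf}fR$; both are valid.
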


\begin{proof}
By Lemma \ref{lem:keyDRT}(2), the assumption $ReR\in\mathsf{Proj}R$ gives $Re\in \mathsf{Proj}(eRe)$.
Thus we have $Re\otimes_{eRe}eRfRe\in \mathsf{Proj}(eRe)$ by $eRfRe\in \mathsf{Proj}(eRe)$.
Since $-\otimes_{eRe}eR$ preserves projectivity, we obtain $Re\otimes_{eRe}eRfRe\otimes_{eRe}eR\in \mathsf{Proj}R$.
By Lemma \ref{lem:keyDRT}(3), we have
\begin{align}
Re\otimes_{eRe}eRfRe\otimes_{eRe}eR
&\cong Re\otimes_{eRe}eR\otimes_{R}RfR\otimes_{R}Re\otimes_{eRe}eR\notag\\
&\cong ReR\otimes_{R}RfR\otimes_{R}ReR.\notag
\end{align}
Moreover, the assumption $ReR\in\mathsf{Proj}R\cap\mathsf{Proj}R^{\mathrm{op}}$ induces
\begin{align}
ReR\otimes_{R}RfR\otimes_{R}ReR
&\cong ReRfR\otimes_{R}ReR =RfR\otimes_{R}ReR\notag\\
&\cong RfReR=RfR\notag
\end{align}
by Lemma \ref{lem:keyDRT}(4) and its dual statement.
Hence $RfR\in\mathsf{Proj}R$.
\end{proof}

If $R/ReR$ and $eRe$ are quasi-hereditary for some idempotent $e\in R$, then we have a chain of idempotent ideals of $R$ as follows.

\begin{proposition}\label{prop:key-qhs}
Let $R$ be a semiprimary ring and let $e$ be an idempotent of $R$.
Then the following statements hold.
\begin{itemize}
\item[(1)] If $R/ReR$ is a quasi-hereditary ring, then there exists a chain of two-sided ideals of $R$
\begin{align}
R= R\varepsilon_{0}R \supseteq R\varepsilon_{1}R \supseteq \cdots \supseteq R\varepsilon_{l-1}R \supseteq R\varepsilon_{l}R=ReR\notag
\end{align}
such that for each $i\in [0,l-1]$, $R\varepsilon_{i}R/R\varepsilon_{i+1}R$ is a heredity ideal of $R/R\varepsilon_{i+1}R$.
\item[(2)] If $eRe$ is a quasi-hereditary ring and $ReR\in \mathsf{Proj}R\cap \mathsf{Proj}R^{\mathrm{op}}$, then there exists a chain of two-sided ideals of $R$ 
\begin{align}
ReR=R\varepsilon_{0}R \supseteq R\varepsilon_{1}R \supseteq \cdots \supseteq R\varepsilon_{m-1}R \supseteq R\varepsilon_{m}R=0\notag
\end{align}
such that for each $j\in [0,m-1]$, $R\varepsilon_{j}R/R\varepsilon_{j+1}R$ is a heredity ideal of $R/R\varepsilon_{j+1}R$.
\end{itemize}
\end{proposition}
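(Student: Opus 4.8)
The plan is to treat the two parts separately, using the correspondence theorem for (1) and Lemma \ref{lem:key-idem} as the main engine for (2).

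For (1), since $R/ReR$ is quasi-hereditary, I would fix a heredity chain $R/ReR = \overline{H}_0\supseteq\cdots\supseteq\overline{H}_l = 0$ and write each $\overline{H}_i = H_i/ReR$ for a two-sided ideal $H_i\supseteq ReR$ of $R$, giving a chain $R = H_0\supseteq\cdots\supseteq H_l = ReR$. First I would check that every $H_i$ is an idempotent ideal of $R$: indeed $ReR$ is idempotent by Lemma \ref{lem:DR2-6}, and $\overline{H}_i = H_i/ReR$ is idempotent in $R/ReR$ (as a member of a heredity chain, via Lemma \ref{lem:lift-idem}), so Lemma \ref{lem:lift-idem} yields that $H_i$ is idempotent in $R$. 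As $R$ is semiprimary, Lemma \ref{lem:DR2-6} produces idempotents $\varepsilon_i$ with $H_i = R\varepsilon_iR$. The heredity of each link then transfers for free: because $H_{i+1}\supseteq ReR$, there is a ring isomorphism $R/H_{i+1}\cong (R/ReR)/\overline{H}_{i+1}$ carrying $H_i/H_{i+1}$ onto $\overline{H}_i/\overline{H}_{i+1}$, so the latter being a heredity ideal forces the former to be one.

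For (2), I would fix a heredity chain $eRe = K_0\supseteq\cdots\supseteq K_m = 0$ of $eRe$ and, using chain-compatible idempotents, write $K_j = (eRe)f_j(eRe)$ with $f_j\in eRe$ satisfying $f_if_j = f_jf_i = f_j$ for $i\le j$, $f_0 = e$ and $f_m = 0$. Setting $\varepsilon_j := f_j$ yields the endpoints $R\varepsilon_0R = ReR$ and $R\varepsilon_mR = 0$, and it remains to show that each $Rf_jR/Rf_{j+1}R$ is a heredity ideal of $\overline{R} := R/Rf_{j+1}R$. Writing $\overline{e},\overline{f_j}$ for the images of $e,f_j$, the ideal in question is $\overline{R}\,\overline{f_j}\,\overline{R}$, which is automatically idempotent; the radical condition $\overline{f_j}J(\overline{R})\overline{f_j} = 0$ I would deduce from the heredity of $K_j/K_{j+1}$ inside the corner together with the standard identity $J(\overline{e}\,\overline{R}\,\overline{e}) = \overline{e}J(\overline{R})\overline{e}$ (and $\overline{f_j} = \overline{e}\,\overline{f_j}\,\overline{e}$). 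The heart of the matter is the projectivity of $\overline{R}\,\overline{f_j}\,\overline{R}$ over $\overline{R}$, which I would obtain by applying Lemma \ref{lem:key-idem} in $\overline{R}$ to the idempotents $\overline{e},\overline{f_j}$, noting $\overline{e}\,\overline{f_j} = \overline{f_j}\,\overline{e} = \overline{f_j}$.

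To feed Lemma \ref{lem:key-idem} I must verify its two hypotheses in $\overline{R}$, and this bookkeeping is where I expect the main work to lie. The hypothesis $\overline{e}\,\overline{R}\,\overline{f_j}\,\overline{R}\,\overline{e}\in\mathsf{Proj}(\overline{e}\,\overline{R}\,\overline{e})$ I would handle by a corner-versus-quotient computation: using $eRe\cap Rf_{j+1}R = eRf_{j+1}Re = K_{j+1}$ one identifies $\overline{e}\,\overline{R}\,\overline{e}\cong eRe/K_{j+1}$, and under this identification the corner $\overline{e}\,\overline{R}\,\overline{f_j}\,\overline{R}\,\overline{e}$ becomes $K_j/K_{j+1}$, projective over $eRe/K_{j+1}$ precisely because it is a heredity ideal there. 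The hypothesis $\overline{R}\,\overline{e}\,\overline{R}\in\mathsf{Proj}\overline{R}\cap\mathsf{Proj}\overline{R}^{\mathrm{op}}$ I would get from base change: since $f_{j+1}\in eRe\subseteq ReR$ one checks $ReR\cdot Rf_{j+1}R = Rf_{j+1}R$, whence $\overline{R}\,\overline{e}\,\overline{R} = ReR/Rf_{j+1}R\cong ReR\otimes_R\overline{R}$, and tensoring the projective module $ReR$ along $R\to\overline{R}$ preserves projectivity (the left-module statement being dual). The anticipated obstacle is thus precisely controlling how the $e$-corner and the ideals $Rf_jR$ interact with passage to the quotient $\overline{R}$; once both hypotheses are in place, Lemma \ref{lem:key-idem} delivers projectivity of the link and the proof concludes.
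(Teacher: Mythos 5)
Your proposal is correct and follows essentially the same route as the paper: part (1) via the correspondence theorem together with Lemmas \ref{lem:DR2-6} and \ref{lem:lift-idem}, and part (2) by lifting the heredity chain of $eRe$, passing to $\overline{R}=R/R\varepsilon_{j+1}R$, and applying Lemma \ref{lem:key-idem} after checking its two hypotheses exactly as you describe (corner identification for $\overline{e}\overline{R}\overline{\varepsilon_j}\overline{R}\overline{e}$, base change for $\overline{R}\overline{e}\overline{R}$). Your write-up in fact makes explicit some bookkeeping (e.g.\ $eRe\cap R\varepsilon_{j+1}R=eR\varepsilon_{j+1}Re$) that the paper leaves implicit.
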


\begin{proof}
(1) Since $R/ReR$ is quasi-hereditary, we have a heredity chain
\begin{align}
R/ReR=H_{0}/ReR \supseteq H_{1}/ReR \supseteq \cdots \supseteq H_{l-1}/ReR \supseteq H_{l}/ReR=0. \notag
\end{align}
By Lemmas \ref{lem:DR2-6} and \ref{lem:lift-idem}, there exists an idempotent $\varepsilon_{i}\in R$ such that $H_{i}=R\varepsilon_{i}R$.
Hence we obtain the desired chain of two-sided ideals.

(2) Since $eRe$ is quasi-hereditary, we have a heredity chain 
\begin{align}
eRe=eR\varepsilon_{0}Re\supseteq eR\varepsilon_{1}Re \supseteq \cdots \supseteq eR\varepsilon_{m-1}Re \supseteq eR\varepsilon_{m}Re=0.\notag
\end{align}
Fix $j\in[0,m-1]$ and let $\overline{(-)}: R\to R/R\varepsilon_{j+1}R$ be a natural surjection.
Since $\overline{e}\overline{R}\overline{\varepsilon_{j}}\overline{R}\overline{e}$ is a heredity ideal of $\overline{e}\overline{R}\overline{e}$, we obtain $\overline{e}\overline{R}\overline{\varepsilon_{j}}\overline{R}\overline{e}\in \mathsf{Proj}(\overline{e}\overline{R}\overline{e})$ and 
$\overline{\varepsilon_{j}}J(\overline{R})\overline{\varepsilon_{j}}=\overline{\varepsilon_{j}}\overline{e}J(\overline{R})\overline{e}\overline{\varepsilon_{j}}=\overline{\varepsilon_{j}}J(\overline{e}\overline{R}\overline{e})\overline{\varepsilon_{j}}=0$.
Moreover, $ReR\in\mathsf{Proj}R$ and $ReR\in \mathsf{Proj}R^{\mathrm{op}}$ imply $\overline{R}\overline{e}\overline{R}\cong ReR\otimes_{R}\overline{R}\in \mathsf{Proj}\overline{R}$ and $\overline{R}\overline{e}\overline{R}\in \mathsf{Proj}\overline{R}^{\mathrm{op}}$ respectively.
By Lemma \ref{lem:key-idem}, $\overline{R}\overline{\varepsilon}_{j}\overline{R}\in \mathsf{Proj}\overline{R}$.
This completes the proof.
\end{proof}

Now, we are ready to prove Theorem \ref{thm:main1}.

\begin{proof}[Proof of Theorem \ref{thm:main1}]
By Proposition \ref{prop:key-qhs}, there exists a chain of two-sided ideals of $R$
\begin{align}
R= R\varepsilon_{0}R \supseteq \cdots \supseteq R\varepsilon_{l}R=ReR \supseteq R\varepsilon_{l+1}R \supseteq \cdots \supseteq R\varepsilon_{m}R=0 \notag
\end{align}
such that for each $i\in [0,m-1]$, $R\varepsilon_{i}R/R\varepsilon_{i+1}R$ is a heredity ideal of $R/R\varepsilon_{i+1}R$.
Hence $R$ is quasi-hereditary.
\end{proof}

If we assume only $ReR\in \mathsf{Proj}R$, then Theorem \ref{thm:main1} does not necessarily hold true.

\begin{example}
Let $K$ be a field and let $A$ be the bound quiver algebra defined by the quiver
\begin{align} 
\xymatrix@=15pt{&3 \ar[ld]_{\gamma} \\
1 \ar[rr]_{\alpha}  && 2\ar[lu]_{\beta} \\
}\notag
\end{align}
with relations $\alpha\beta\gamma\alpha$ and $\gamma\alpha\beta$ (for the definition of bound quiver algebras, see \cite{ASS06}). 
Let $e:=e_{1}+e_{3}$, where $e_{i}$ is the primitive idempotent corresponding to a vertex $i$. 
Then $A/AeA$ and $eAe$ are quasi-hereditary, $AeA\in\mathsf{Proj}A$ and $AeA\notin \mathsf{Proj}A^{\mathrm{op}}$.  
However, $A$ is not quasi-hereditary since there does not exist a heredity ideal of $A$. 
\end{example}

\section{Morita context rings and quasi-hereditary rings}

Using Theorem \ref{thm:main1}, we give a sufficient condition for Morita context rings to be quasi-hereditary.
Let $\mathcal{M}:=(R,S,M,N,\varphi,\psi)$ be a Morita context, that is, $R,S$ are associative unital rings, $M$ is an $R$-$S$-bimodule, $N$ is an $S$-$R$-bimodule, $\varphi:M\otimes_{S}N\to R$ is an $R$-$R$-bimodule homomorphism and $\psi:N\otimes_{R}M\to S$ is an $S$-$S$-bimodule homomorphism satisfying $\varphi(m\otimes n)m'=m\psi(n\otimes m')$ and $\psi(n\otimes m)n'=n\varphi(m\otimes n')$ for all $m,m'\in M$ and $n,n'\in N$. 
We define an associative unital ring $\Lambda(\mathcal{M})$, called a \emph{Morita context ring}, as 
\begin{align}
\Lambda(\mathcal{M}):=\left( \begin{array}{cc}
R&M\\
N&S
\end{array} \right):=
\left\{ \left( \begin{array}{cc}
r&m\\
n&s
\end{array} \right)\;\middle|\; r\in R,\ m\in M,\ n\in N, s\in S \right\}\notag
\end{align}
with multiplication 
\begin{align}\left( \begin{array}{cc}
r&m\\
n&s
\end{array} \right)\left( \begin{array}{cc}
r'&m'\\
n'&s'
\end{array} \right)=\left( \begin{array}{cc}
rr'+\varphi(m\otimes n')&rm'+ms'\\
nr'+sn'&\psi(n\otimes m')+ss'
\end{array} \right).\notag
\end{align}
Our aim of this paper is to prove the following result.

\begin{theorem}\label{thm:main2}
Let $\Lambda(\mathcal{M})$ be a Morita context ring which is semiprimary.
Assume that the following four conditions are satisfied.
\begin{itemize}
\item[(a)] $R/\operatorname{Im}\varphi$ and $S$ are quasi-hereditary rings.
\item[(b)] $M$ is a projective $S$-module.
\item[(c)] $N$ is a projective $S^{\mathrm{op}}$-module.
\item[(d)] $\varphi$ is a monomorphism.
\end{itemize}
Then $\Lambda(\mathcal{M})$ is a quasi-hereditary ring.
\end{theorem}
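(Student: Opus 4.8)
The plan is to reduce the statement to Theorem \ref{thm:main1} by choosing the corner idempotent $e:=\left(\begin{smallmatrix} 0&0\\0&1\end{smallmatrix}\right)\in\Lambda(\mathcal{M})$. Writing $\Lambda:=\Lambda(\mathcal{M})$, a direct matrix computation gives $e\Lambda e\cong S$ together with
\begin{align*}
\Lambda e\Lambda=\left(\begin{smallmatrix}\operatorname{Im}\varphi&M\\N&S\end{smallmatrix}\right),\qquad \Lambda/\Lambda e\Lambda\cong R/\operatorname{Im}\varphi,
\end{align*}
where the top-left entry of $\Lambda e\Lambda$ is $MN=\operatorname{Im}\varphi$ and the remaining entries use $MS=M$ and $SN=N$. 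Hence hypothesis (a) of Theorem \ref{thm:main1} for the pair $(\Lambda,e)$ is exactly hypothesis (a) of the present statement. Since $\Lambda$ is assumed semiprimary, once I verify that $\Lambda e\Lambda$ is projective both as a $\Lambda$-module and as a $\Lambda^{\mathrm{op}}$-module, Theorem \ref{thm:main1} will conclude that $\Lambda$ is quasi-hereditary.

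The central device is the multiplication map $\mu\colon \Lambda e\otimes_{e\Lambda e}e\Lambda\to\Lambda e\Lambda$, which I claim is an isomorphism. Using $e\Lambda e\cong S$, we have $\Lambda e\cong M\oplus S$ as a right $S$-module and $e\Lambda\cong N\oplus S$ as a left $S$-module, so that
\begin{align*}
\Lambda e\otimes_{e\Lambda e}e\Lambda\cong (M\otimes_{S}N)\oplus M\oplus N\oplus S,
\end{align*}
which I match component-wise with $\Lambda e\Lambda\cong\operatorname{Im}\varphi\oplus M\oplus N\oplus S$. On the summands $M$, $N$, $S$ the map $\mu$ is the identity, while on $M\otimes_{S}N$ it is induced by $\varphi$. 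Surjectivity is then automatic, and injectivity holds precisely because $\varphi$ is a monomorphism, which is hypothesis (d). This is the step where (d) is indispensable, and it is the main point requiring care.

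With $\mu$ an isomorphism, the two projectivity conditions follow by transporting the one-sided $S$-projectivity hypotheses across the corner. By (b), $M$ is projective over $S$, so $\Lambda e\cong M\oplus S$ is projective over $e\Lambda e\cong S$; splitting $\Lambda e$ off some $(e\Lambda e)^{(I)}$ and applying $-\otimes_{e\Lambda e}e\Lambda$ exhibits $\Lambda e\Lambda\cong\Lambda e\otimes_{e\Lambda e}e\Lambda$ as a direct summand of $(e\Lambda)^{(I)}$, whence $\Lambda e\Lambda\in\mathsf{Proj}\Lambda$, giving (b) of Theorem \ref{thm:main1}. Dually, by (c), $N$ is projective over $S^{\mathrm{op}}$, so $e\Lambda\cong N\oplus S$ is projective as a left $e\Lambda e$-module, and the symmetric argument (splitting $e\Lambda$ off $(e\Lambda e)^{(I)}$ and applying $\Lambda e\otimes_{e\Lambda e}-$) yields $\Lambda e\Lambda$ as a summand of $(\Lambda e)^{(I)}$, so $\Lambda e\Lambda\in\mathsf{Proj}\Lambda^{\mathrm{op}}$, giving (c).

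Applying Theorem \ref{thm:main1} to the pair $(\Lambda,e)$ then completes the proof. The only genuine obstacle is establishing that $\mu$ is an isomorphism—equivalently, its injectivity, which is exactly where the monomorphism hypothesis (d) enters. Everything else is the bookkeeping of identifying the four matrix corners of $\Lambda e\Lambda$ and of upgrading the one-sided $S$-projectivity hypotheses (b) and (c) to the two-sided $\Lambda$-projectivity demanded by Theorem \ref{thm:main1}.
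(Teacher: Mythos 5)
Your proposal is correct and follows essentially the same route as the paper: both reduce to Theorem \ref{thm:main1} via the corner idempotent $e=\left(\begin{smallmatrix}0&0\\0&1_S\end{smallmatrix}\right)$, use (d) to identify $\operatorname{Im}\varphi$ with $M\otimes_S N$, and use (b), (c) to transport projectivity over $S$ to projectivity of $\Lambda e\Lambda$ on each side. The only cosmetic difference is that you verify the multiplication map $\Lambda e\otimes_{e\Lambda e}e\Lambda\to\Lambda e\Lambda$ is an isomorphism by a direct component-wise computation, whereas the paper splits off the bottom row and cites Green's structure theorem for modules over Morita context rings.
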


\begin{proof}
Let $\Lambda:=\Lambda(\mathcal{M})$ and $e:=\left( \begin{array}{cc}
0&0\\
0&1_{S}
\end{array} \right)\in \Lambda$.
Then $e$ is an idempotent of $\Lambda$ satisfying $\Lambda/\Lambda e\Lambda\cong R/\operatorname{Im}\varphi$ and $e\Lambda e\cong S$.
By (a), $\Lambda/\Lambda e\Lambda$ and $e\Lambda e$ are quasi-hereditary.
Moreover, $e\Lambda e\Lambda=e\Lambda\in \mathsf{Proj}\Lambda$ and $\Lambda e\Lambda e=\Lambda e\in\mathsf{Proj}\Lambda^{\mathrm{op}}$.
By Theorem \ref{thm:main1}, it is enough to show $(1-e)\Lambda e\Lambda \in \mathsf{Proj}\Lambda$ and $\Lambda e\Lambda (1-e)\in \mathsf{Proj}\Lambda^{\mathrm{op}}$.
By \cite[(1.5) Theorem]{G82}, we obtain isomorphisms
\begin{align}
(1-e)\Lambda e \Lambda=
\left( \begin{array}{cc}
\operatorname{Im}\varphi&M\\
0&0
\end{array} \right)\cong 
\left( \begin{array}{cc}
\operatorname{Im}\varphi&M
\end{array} \right)\cong 
\left( \begin{array}{cc}
M\otimes_{S}N&M
\end{array} \right)\notag
\end{align}
as a $\Lambda$-module, where the last isomorphism follows from (d).
On the other hand, we have isomorphisms
\begin{align}
e\Lambda=\left( \begin{array}{cc}
0&0\\
N&S
\end{array} \right)\cong \left( \begin{array}{cc}
N&S
\end{array} \right)\cong \left( \begin{array}{cc}
S\otimes_{S}N&S
\end{array} \right)\notag
\end{align}
as a $\Lambda$-module.
Thus it follows from (b) that $(1-e)\Lambda e\Lambda$ is a projective $\Lambda$-module.
This implies $\Lambda e\Lambda\in \mathsf{Proj}\Lambda$.
Similarly, we can show $\Lambda e\Lambda\in \mathsf{Proj}\Lambda^{\mathrm{op}}$ by (c) and (d).
Thus $\Lambda e\Lambda\in \mathsf{Proj}\Lambda \cap \mathsf{Proj}\Lambda^{\mathrm{op}}$.
Hence $\Lambda$ is quasi-hereditary.
\end{proof}

By Theorem \ref{thm:main2}, we show that an upper triangular matrix ring of a quasi-hereditary ring is also quasi-hereditary.

\begin{example}\label{ex:utm}
Let $R$ be a quasi-hereditary ring and let
\begin{align}
\Lambda:=\left( \begin{array}{cc}
R'&M\\
0&R
\end{array} \right):=\left( \begin{array}{cccc|c}
R & R & \cdots & R & R \\
  & R & \cdots & R & R \\
  &   & \ddots & \vdots    & \vdots  \\ 
 \text{\LARGE{0}} &   &        & R & R \\\hline
0 & 0 & \cdots & 0 & R
\end{array} \right)\notag
\end{align}
be an upper triangular matrix ring.
Then $\mathcal{M}:=(R', R, M, 0, 0, 0)$ is clearly a Morita context and $\Lambda=\Lambda(\mathcal{M})$.
Since $R$ is a semiprimary ring, the upper triangular matrix ring $\Lambda$ is also semiprimary.
Moreover, $M$ is a direct summand of a direct sum of a copy of $R$ as an $R$-module.
Thus $M$ is a projective $R$-module.
By induction on the size of the matrix, $R'$ is also quasi-hereditary.
Hence it follows from Theorem \ref{thm:main2} that $\Lambda$ is quasi-hereditary.
\end{example}

\section{Block extensions of quasi-hereditary rings}

In this section, we study block extensions of a quasi-hereditary ring.
We start with recalling the definition of block extensions of semiperfect rings. For details, refer to \cite{O90, BO09}.
Let $R$ be a basic semiperfect ring. 
Then there exists a complete set $\{ e_{1}, e_{2}, \ldots, e_{n}\}$ of primitive orthogonal idempotents.
For each $i,j\in [1, n]$, we put $R_{ij}:=e_{i}Re_{j}$ and $R_{i}:=R_{ii}$. Note that $R_{i}$ is a subring of $R$ with identity $e_{i}$ and $R_{ij}$ is an $R_{i}$-$R_{j}$-bimodule.
Let $l_{1}, l_{2}, \ldots, l_{n}$ be positive integers.
For each $i\in [1,n]$, let $l_{\leq i}:=l_{1}+l_{2}+\cdots+l_{i}$, $l:=l_{\leq n}$ and $l_{\leq 0}:=0$.
For $i,s\in [1,n]$, we define an $l_{i}\times l_{s}$ matrix $B(i,s):=(b_{j,t})_{1\leq j\leq l_{i}, 1\leq t\leq l_{s}}$, 
where the $(j,t)$-entry $b_{j,t}$ is given by
\begin{align}
b_{j,t}=
\begin{cases}
\ R_{i} &(i=s,\ j\leq t),\\
\ J(R_{i}) &(i=s,\ j>t),\\
\ R_{is} &(i\neq s).
\end{cases}\notag
\end{align}
Note that $B(i,i)$ is a basic ring with usual matrix multiplication and $B(i,s)$ is a $B(i,i)$-$B(s,s)$-bimodule.

\begin{definition}\label{def:block-ext}
We define the \emph{block extension} $B:=B(R; l_{1},l_{2},\ldots, l_{n})$ of $R$ for $(l_{1}, l_{2},\ldots, l_{n})$ as an $l\times l$ matrix with block decomposition
\begin{align}
\left( \begin{array}{cccc}
B(1,1)&B(1,2)&\cdots &B(1,n)\\
B(2,1)&B(2,2)&\cdots &B(2,n)\\
\vdots&\vdots&\ddots&\vdots\\
B(n,1)&B(n,2)&\cdots&B(n,n)
\end{array} \right).\notag
\end{align}
Clearly, $B$ forms a basic ring with usual matrix multiplication. 
\end{definition}

Note that each block extension clearly forms a Morita context as
\begin{align}
B=\left(
\begin{array}{ccc|ccc}
B(1,1)&\cdots&B(1,k-1)&B(1,k)&\cdots&B(1,n)\\
\vdots&\ddots&\vdots  &\vdots&      &\vdots\\
B(k-1,1)&\cdots&B(k-1,k-1)&B(k-1,k)&\cdots&B(k-1,n)\\\hline
B(k,1)&\cdots&B(k,k-1)&B(k,k)&\cdots&B(k,n)\\
\vdots&      &\vdots  &\vdots&\ddots&\vdots\\
B(n,1)&\cdots&B(n,k-1)&B(n,k)&\cdots&B(n,n)
\end{array}
\right)=:
\left(
\begin{array}{cc}
B^{(k)}_{11}&B^{(k)}_{12}\\
B^{(k)}_{21}&B^{(k)}_{22}
\end{array}
\right)\notag
\end{align}
and multiplications $\varphi^{(k)}:B^{(k)}_{12}\otimes_{B^{(k)}_{22}}B^{(k)}_{21}\to B^{(k)}_{11}$, $\psi^{(k)}:B^{(k)}_{21}\otimes_{B^{(k)}_{11}}B^{(k)}_{12}\to B^{(k)}_{22}$. 
Namely, $\mathcal{M}:=(B^{(k)}_{11},B^{(k)}_{22},B^{(k)}_{12},B^{(k)}_{21},\varphi^{(k)},\psi^{(k)})$ is a  Morita context and $B=\Lambda(\mathcal{M})$. 
By $\operatorname{Im}\varphi^{(k)} \subset J(B^{(k)}_{11})$ and $\operatorname{Im}\psi^{(k)}\subset J(B^{(k)}_{22})$, it follows from \cite[Theorem 2.7(5)]{TLZ14} that $B$ is a semiprimary ring if $R$ is semiprimary.

By regarding block extensions as Morita context rings, it follows from Theorem \ref{thm:main2} that quasi-hereditary rings are closed under taking block extensions.

\begin{theorem}\label{thm:main3}
Let $R$ be a basic semiprimary ring with complete set $\{ e_{1}, e_{2}, \ldots, e_{n}\}$ of primitive orthogonal idempotents of $R$ and let $\ell_{1}, \ell_{2}, \ldots, \ell_{n}$ be positive integers. 
If $R$ is quasi-hereditary, then so is the block extension $B(R;\ell_{1}, \ell_{2},\ldots,\ell_{n})$.
\end{theorem}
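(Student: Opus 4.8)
The plan is to argue by induction on the number $n$ of primitive idempotents, viewing the block extension $B:=B(R;\ell_{1},\ldots,\ell_{n})$ as the Morita context ring $B=\Lambda(\mathcal{M})$ with $\mathcal{M}=(B^{(k)}_{11},B^{(k)}_{22},B^{(k)}_{12},B^{(k)}_{21},\varphi^{(k)},\psi^{(k)})$ exhibited above, and applying Theorem \ref{thm:main2} at the idempotent that splits off a heredity ideal of $R$. A preliminary observation is that $B$ is, up to isomorphism, independent of the ordering of $e_{1},\ldots,e_{n}$: conjugating by a permutation matrix simultaneously permutes the block rows and columns while preserving the internal structure of each diagonal block $B(i,i)$, so $B(R;\ell_{1},\ldots,\ell_{n})\cong B(R;\ell_{\sigma(1)},\ldots,\ell_{\sigma(n)})$ for every $\sigma\in\mathfrak{S}_{n}$ relative to the reordered idempotents. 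This lets me place a chosen heredity idempotent of $R$ as a final segment $e_{k}+\cdots+e_{n}$ of the complete set.

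If $R$ is semisimple, it is a product of the division rings $R_{i}=e_{i}Re_{i}$ with $e_{i}Re_{j}=0$ for $i\neq j$, so $B=\prod_{i}B(i,i)$ is a product of upper triangular matrix rings over division rings, hence quasi-hereditary by Example \ref{ex:utm} (a finite product of quasi-hereditary rings is quasi-hereditary); this is the base case. Otherwise the bottom term $R\varepsilon_{m-1}R$ of a heredity chain of $R$ is a proper heredity ideal, and I set $f:=\varepsilon_{m-1}$; since $R$ is basic, Lemma \ref{lem:DR2-6} lets me take $f$ to be a sum of primitive idempotents, and after reordering $f=e_{k}+\cdots+e_{n}$ with $2\le k\le n$. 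As $R\varepsilon_{m-1}R$ is a heredity ideal we have $fJ(R)f=0$, so $fRf$ is semisimple and, being basic, equals $\prod_{s\ge k}R_{s}$ with division rings $R_{s}$ and $e_{i}Re_{s}=0$ for distinct $i,s\ge k$; consequently $B^{(k)}_{22}=B(fRf;\ell_{k},\ldots,\ell_{n})=\prod_{s\ge k}B(s,s)$ is again a product of triangular matrix rings over division rings and is quasi-hereditary. On the other side, $\operatorname{Im}\varphi^{(k)}$ is the ideal of $B^{(k)}_{11}$ whose $(i,j)$-blocks are $e_{i}RfRe_{j}$, so $B^{(k)}_{11}/\operatorname{Im}\varphi^{(k)}\cong B(R/RfR;\ell_{1},\ldots,\ell_{k-1})$ (the triangular structure of the diagonal blocks survives because $e_{i}RfRe_{i}\subseteq J(R_{i})$); as $R/RfR$ is quasi-hereditary by Proposition \ref{prop:DR2-710} and has the $k-1<n$ primitive idempotents $\overline{e_{1}},\ldots,\overline{e_{k-1}}$, the induction hypothesis makes this quotient quasi-hereditary. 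Thus condition (a) of Theorem \ref{thm:main2} holds.

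For (b) and (c), the key point is that each $R_{s}$ with $s\ge k$ is a division ring, so every $e_{i}Re_{s}$ is free as a right $R_{s}$-module and every $e_{s}Re_{j}$ is free as a left $R_{s}$-module; decomposing $B^{(k)}_{12}$ and $B^{(k)}_{21}$ over the block-diagonal ring $B^{(k)}_{22}=\prod_{s}B(s,s)$, each row of $B^{(k)}_{12}$ becomes a direct sum of copies of the projective first-row module of some $B(s,s)$, and dually each column of $B^{(k)}_{21}$, giving $B^{(k)}_{12}\in\mathsf{Proj}B^{(k)}_{22}$ and $B^{(k)}_{21}\in\mathsf{Proj}(B^{(k)}_{22})^{\mathrm{op}}$. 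The heart of the argument is (d), the injectivity of $\varphi^{(k)}$. Since $RfR$ is a heredity ideal it is projective, so Lemma \ref{lem:keyDRT}(3) makes the ring multiplication $\mu\colon Rf\otimes_{fRf}fR\to RfR$ bijective. Unwinding the tensor product over $B^{(k)}_{22}$ and using the division-ring corners, $\varphi^{(k)}$ decomposes as a direct sum, indexed by the rows and columns of the outer blocks, of copies of the corner map $e_{i}\mu e_{j}\colon\bigoplus_{s\ge k}e_{i}Re_{s}\otimes_{R_{s}}e_{s}Re_{j}\to e_{i}RfRe_{j}$; each such corner of the bijection $\mu$ is injective, whence $\varphi^{(k)}$ is injective.

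Conditions (a)--(d) then yield, by Theorem \ref{thm:main2}, that $B$ is quasi-hereditary, completing the induction. I expect the delicate step to be (d): one must check that, after tensoring over $B^{(k)}_{22}=\prod_{s}B(s,s)$ rather than over $fRf=\prod_{s}R_{s}$, the first-row and last-column projectives recombine to reproduce exactly the corner $e_{i}\mu e_{j}$ with no residual kernel, and it is precisely here that both the heredity of $RfR$ (through Lemma \ref{lem:keyDRT}(3)) and the division-ring nature of the $R_{s}$ are used.
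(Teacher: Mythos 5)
Your proof is correct and follows essentially the same route as the paper: both realize $B$ as the Morita context ring $\Lambda(\mathcal{M})$ split along a heredity idempotent of $R$, verify conditions (a)--(d) of Theorem \ref{thm:main2} by the same tensor computations, and induct on $n$. The only divergence is that the paper first invokes \cite[Proposition 1.3]{UY90} to assume the heredity idempotent is a single primitive idempotent $e_{n}$ (so that $B^{(n)}_{22}=B(n,n)$ is one triangular matrix ring and Lemma \ref{lem:fac-ring} applies verbatim), whereas you work with the full idempotent $f=\varepsilon_{m-1}=e_{k}+\cdots+e_{n}$, at the mild cost of handling $B^{(k)}_{22}$ as a product of triangular matrix rings over division rings and summing over $s$ in the injectivity argument for $\varphi^{(k)}$.
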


In the following, we assume that $R$ is a semiprimary ring and $\{ e_{1}, e_{2}, \ldots, e_{n}\}$ is a complete set of primitive orthogonal idempotents.
Let $B:=B(R;l_{1},l_{2},\ldots, l_{n})$ be the block extension.
For each $i\in [1,n]$, we define $\tilde{e}_{i}\in B$ as $\tilde{e}_{i}:=(b_{j,t})$, where 
\begin{align}
b_{j,t}=
\begin{cases}
e_{i} &(j=t\in [l_{\leq i-1}+1, l_{\leq i}]),\\
0 &(\textnormal{otherwise}).
\end{cases}\notag
\end{align}
Then $\tilde{e}_{i}$ is an idempotent of $B$ and $\tilde{e}_{i}B\tilde{e}_{i}\cong B(i,i)= B(R_{i};l_{i})$.
Moreover, we have the following result.

\begin{lemma}\label{lem:fac-ring}
For each $i\in[1,n]$, the factor ring $B/B\tilde{e}_{i}B$ is isomorphic to the block extension $B(R/Re_{i}R;l_{1},\ldots, l_{i-1},l_{i+1},\ldots,l_{n})$.
\end{lemma}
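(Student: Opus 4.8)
The plan is to realise $B/B\tilde{e}_iB$ as the image of an explicit entrywise quotient map and to identify its kernel with $B\tilde{e}_iB$. Write $\overline{R}:=R/Re_iR$ and let $\pi\colon R\to\overline{R}$ be the canonical projection, so $\overline{e}_i:=\pi(e_i)=0$ while $\{\overline{e}_j\}_{j\neq i}$ is a complete set of primitive orthogonal idempotents of the basic semiprimary ring $\overline{R}$. Since $B$ is a subring of $M_l(R)$ with the usual matrix multiplication, applying $\pi$ to every entry gives a ring homomorphism $\overline{\pi}\colon B\to M_l(\overline{R})$. First I would check that $\operatorname{Im}\overline{\pi}\subseteq B(\overline{R};l_1,\dots,l_n)$: off-diagonal and upper-triangular diagonal entries $R_{st},R_s$ map into $\overline{R}_{st},\overline{R}_s$, and strictly-lower diagonal entries $J(R_s)$ map into $\pi(J(R_s))\subseteq J(\overline{R}_s)$. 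Because $\overline{e}_i=0$, the whole $i$-th block row and $i$-th block column of the image vanish, so deleting these zero rows and columns identifies $\operatorname{Im}\overline{\pi}$ with $B(\overline{R};l_1,\dots,l_{i-1},l_{i+1},\dots,l_n)$, the target ring of the lemma.

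The only place the basic hypothesis enters is the behaviour of the radical on the diagonal blocks, which I would isolate as the key computation. For $s\neq i$ basicness gives $e_sRe_i\subseteq J(R)$, whence $e_s(Re_iR)e_s=e_sRe_iRe_s\subseteq e_sRe_s\cap J(R)=J(R_s)$ (using $e_sRe_s\cap J(R)=e_sJ(R)e_s=J(R_s)$). Consequently $\pi$ restricts to a surjection $J(R_s)\twoheadrightarrow J(\overline{R}_s)$, i.e. $J(\overline{R}_s)=\pi(J(R_s))$, so the strictly-lower-triangular entries of the target are hit; combined with the obvious surjections $\pi(R_{st})=\overline{R}_{st}$ and $\pi(R_s)=\overline{R}_s$ on the remaining entries, this shows $\overline{\pi}$ is surjective.

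It then remains to prove $\ker\overline{\pi}=B\tilde{e}_iB$. On a block $(s,t)$ with $s,t\neq i$ the kernel consists of the matrices with entries in $\ker(\pi|_{R_{st}})=e_sRe_t\cap Re_iR=e_s(Re_iR)e_t$ (both triangular halves when $s=t$, since $e_s(Re_iR)e_s\subseteq J(R_s)$), while the entire $i$-th block row and column lie in $\ker\overline{\pi}$ because $\overline{e}_i=0$. The inclusion $B\tilde{e}_iB\subseteq\ker\overline{\pi}$ is immediate from $\overline{\pi}(\tilde{e}_i)=0$. For the reverse inclusion I would compute $B\tilde{e}_iB$ directly: for $X,Y\in B$ the middle $\tilde{e}_i$ acts as the identity of the $i$-th block, so the $(s,t)$-block of $X\tilde{e}_iY$ is the matrix product $X_{si}Y_{it}$, whose entries lie in $e_sRe_iRe_t=e_s(Re_iR)e_t$. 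Conversely, taking $X=\tilde{e}_i$ (resp. $Y=\tilde{e}_i$) shows $B\tilde{e}_iB$ contains the full $i$-th block row (resp. column), and choosing $X,Y$ with a single nonzero entry $a\in e_sRe_i$, $b\in e_iRe_t$ realises each generator $ab$ of $e_s(Re_iR)e_t$ in the $(s,t)$-block. Since these additive generators exhaust $\ker\overline{\pi}$, equality follows and $B/B\tilde{e}_iB\cong\operatorname{Im}\overline{\pi}=B(\overline{R};l_1,\dots,l_{i-1},l_{i+1},\dots,l_n)$.

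The main obstacle I expect is this last step: pinning down $B\tilde{e}_iB$ block by block and matching it against $\ker\overline{\pi}$, while keeping track of the asymmetric triangular structure of the diagonal blocks $B(s,s)$. In particular the delicate point is that $e_s(Re_iR)e_s$, which \emph{a priori} could be large, is forced into $J(R_s)$ by the basic hypothesis, so that the diagonal-block entries of the kernel and of the ideal genuinely coincide in both their upper and lower halves.
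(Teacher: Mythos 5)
Your proposal is correct and is essentially the paper's argument viewed from the other direction: the paper computes the ideal $B\tilde{e}_{n}B$ block by block and then forms the entrywise quotient, whereas you construct the entrywise reduction homomorphism onto $B(R/Re_{i}R;\ldots)$ and identify its kernel with $B\tilde{e}_{i}B$; in both cases the decisive point is the same containment $e_{s}(Re_{i}R)e_{s}\subseteq J(R_{s})$ forced by basicness, which reconciles the lower-triangular diagonal entries. No gap.
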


\begin{proof}
For simplicity, assume $i=n$. Then we have
\begin{align}\label{eq:matrix}
B\tilde{e}_{n}=\left( \begin{array}{cc}
&B(1,n)\\
\text{\LARGE{0}}&\vdots\\
&B(n-1,n)\\
&B(n,n)
\end{array} \right),\ 
\tilde{e}_{n}B=\left( \begin{array}{cccc}
&\text{\LARGE{0}}&&\\
B(n,1)&\cdots&B(n,n-1)&B(n,n)
\end{array} \right).
\end{align}
We show $B/B\tilde{e}_{n}B\cong B(R/Re_{n}R;l_{1},\ldots, l_{n-1})$.
By \eqref{eq:matrix}, we obtain 
\begin{align}
B\tilde{e}_{n}B=
\left(
\begin{array}{cccc}
B(1,n)B(n,1)&\cdots&B(1,n)B(n,n-1)&B(1,n)\\
\vdots&\ddots&\vdots&\vdots\\
B(n-1,n)B(n,1)&\cdots&B(n-1,n)B(n,n-1)&B(n-1,n)\\
B(n,1)&\cdots&B(n,n-1) &B(n,n)
\end{array} 
\right)\notag
\end{align}
and
\begin{align}
B/B\tilde{e}_{n}B\cong
\left(
\begin{array}{ccc}
B(1,1)/B(1,n)B(n,1)&\cdots&B(1,n-1)/B(1,n)B(n,n-1)\\
\vdots&\ddots&\vdots\\
B(n-1,1)/B(n-1,n)B(n,1)&\cdots&B(n-1,n-1)/B(n-1,n)B(n,n-1)
\end{array} 
\right).\notag
\end{align}
Let $\overline{(-)}:R\to R/Re_{n}R$ be a natural surjection.
If $i\neq s$, then each entry of $B(i,s)/B(i,n)B(n,s)$ is 
\begin{align}
e_{i}Re_{s}/e_{i}Re_{n}Re_{s}\cong \overline{e}_{i}\overline{R}\overline{e}_{s}.\notag
\end{align}
Assume $i=s$. If $j\leq t$, then the $(j,t)$-entry of $B(i,i)/B(i,n)B(n,i)$ is
\begin{align}
e_{i}Re_{i}/e_{i}Re_{n}Re_{i}\cong \overline{e}_{i}\overline{R}\overline{e}_{i}.\notag
\end{align}
If $j>t$, then the $(j,t)$-entry of $B(i,i)/B(i,n)B(n,i)$ is 
\begin{align}
e_{i}J(R)e_{i}/e_{i}Re_{n}Re_{i}=J(e_{i}Re_{i})/e_{i}Re_{n}Re_{i}\overset{\text{($\ast$)}}{=}J(e_{i}Re_{i}/e_{i}Re_{n}Re_{i})\cong \overline{e}_{i}J(\overline{R})\overline{e}_{i},\notag
\end{align}
where ($\ast$) follows from $e_{i}Re_{n}Re_{i}\subseteq e_{i}J(R)e_{n}J(R)e_{i}\subseteq J(e_{i}Re_{i})$.
Thus $B/B\tilde{e}_{n}B\cong B(\overline{R};l_{1},\ldots, l_{n-1})$. This completes the proof.
\end{proof}

Now, we are ready to prove Theorem \ref{thm:main3}.

\begin{proof}[Proof of Theorem \ref{thm:main3}]
Let $R$ be a quasi-hereditary ring. 
By Proposition \ref{prop:DR2-710}, there exists an idempotent $e\in R$ such that $ReR\in \mathsf{Proj}R\cap\mathsf{Proj}R^{\mathrm{op}}$ and $eJ(R)e=0$. By \cite[Proposition 1.3]{UY90}, we may assume $e=e_{n}$.

Recall that $\mathcal{M}:=(B_{11}^{(n)},B_{22}^{(n)},B_{12}^{(n)},B_{21}^{(n)},\varphi^{(n)},\psi^{(n)})$ is a Morita context and $B=\Lambda(\mathcal{M})$. For brevity, we omit the index $n$ in the Morita context.
By Theorem \ref{thm:main2}, it is enough to show that (a) $B_{11}/\operatorname{Im} \varphi$ and $B_{22}$ are quasi-hereditary, (b) $B_{12}\in \mathsf{Proj}B_{22}$, (c) $B_{21}\in \mathsf{Proj}B_{22}^{\mathrm{op}}$, and (d) $\varphi$ is a monomorphism. 

We show (b). We have an isomorphism $B_{12}\cong B(1,n)\oplus B(2,n)\oplus \cdots \oplus B(n-1,n)$ as a $B_{22}$-module.
By $Re_{n}R\in \mathsf{Proj}R$, it follows from Lemma \ref{lem:keyDRT}(2) that $(1-e_{n})Re_{n}$ is a projective $R_{n}$-module. For each $i\in[1,n-1]$, we obtain $R_{in}\in \mathsf{Add}R_{n}$. 
Thus we have $B(i,n)\cong \left( \begin{array}{cccc}
R_{in}&R_{in}&\cdots &R_{in}\\
\end{array} \right)^{\oplus l_{i}}\in \mathsf{Add}(e_{n,1}B_{22})\subset \mathsf{Proj}B_{22}$, where $e_{n,1}=(b_{j,t})\in B_{22}=B(n,n)$ is given by $b_{j,t}=e_{n}$ if $j=t=1$ and $0$ if otherwise.
Hence $B_{12}\in\mathsf{Proj}B_{22}$.
Similarly, we can show (c).
We show (d). By the universality of tensor products, we have isomorphisms
\begin{align}
B_{12}\otimes_{B_{22}}B_{21}
&=\left( \begin{array}{c}
B(1,n)\\ \vdots\\ B(n-1,n)
\end{array} \right) \otimes_{B_{22}} \left( \begin{array}{ccc}
B(n,1)& \cdots& B(n,n-1)
\end{array} \right)\notag\\
&\cong \left( \begin{array}{ccc}
B(1,n)\otimes_{B(n,n)}B(n,1)& \cdots& B(1,n)\otimes_{B(n,n)}B(n,n-1)\\
\vdots&\ddots&\vdots\\
B(n-1,n)\otimes_{B(n,n)}B(n,1)& \cdots& B(n-1,n)\otimes_{B(n,n)}B(n,n-1)
\end{array} \right)\notag
\end{align}
and
\begin{align}
B(i,n)\otimes_{B(n,n)} B(n,j)\cong \left( \begin{array}{ccc}
R_{in}\otimes_{R_{n}}R_{nj}& \cdots& R_{in}\otimes_{R_{n}}R_{nj}\\
\vdots&&\vdots\\
R_{in}\otimes_{R_{n}}R_{nj}& \cdots& R_{in}\otimes_{R_{n}}R_{nj}
\end{array} \right).\notag
\end{align}
By Lemma \ref{lem:keyDRT}(3), we obtain $R_{in}\otimes_{R_{n}}R_{nj}\cong e_{i}Re_{n}Re_{j}$, and hence $B(i,n)\otimes_{B(n,n)}B(n,j)\cong B(i,n)B(n,j)$.
Therefore $B_{12}\otimes_{B_{22}}B_{21}\cong B_{12}B_{21}$.
This implies that $\varphi$ is a monomorphism.
We show (a). 
Since $R_{n}$ is quasi-hereditary and $J(R_{n})=e_{n}J(R)e_{n}=0$, it follows from Example \ref{ex:utm} that $B_{22}=B(n,n)$ is a quasi-hereditary ring.
By Lemma \ref{lem:fac-ring}, we have $B/\operatorname{Im}\varphi \cong B/B\tilde{e}_{n}B\cong B(R/Re_{n}R; l_{1},l_{2},\ldots, l_{n-1})$. Since $R/Re_{n}R$ is a quasi-hereditary, $B(R/Re_{n}R; l_{1},l_{2},\ldots, l_{n-1})$ is inductively quasi-hereditary. 
Thus, by Theorem \ref{thm:main2}, the block extension $B$ is quasi-hereditary. 
\end{proof}

\subsection*{Acknowledgements}

The second author would like to express her deep gratitude to Kota Yamaura for helpful comments.
The authors are grateful to Ryoichi Kase for useful discussions.
The second author is supported by JSPS KAKENHI Grant Number JP23K12959.

\end{document}